\theoremstyle{plain}
\newtheorem{thm}{Theorem}[section]
\newtheorem*{thm*}{Theorem}
\newtheorem{prop}{Proposition}[section]
\newtheorem*{prop*}{Proposition}
\newtheorem{cor}{Corollary}[section]
\newtheorem*{cor*}{Corollary}
\newtheorem{lem}{Lemma}[section]
\newtheorem*{lem*}{Lemma}
\theoremstyle{definition}
\newtheorem{defn}{Definition}[section]
\newtheorem*{defn*}{Definition}
\newtheorem*{exmp*}{Example}
\newtheorem{exmps}{Examples}[section]
\newtheorem*{exmps*}{Examples}
\newtheorem{rem}{Remark}[section]
\newtheorem*{rem*}{Remark}
\newtheorem{rems}{Remarks}[section]
\newtheorem*{rems*}{Remarks}
\newtheorem*{note*}{Note}
\newcommand{\N}{{\mathbb N}}
\newcommand{\Z}{{\mathbb Z}}
\newcommand{\R}{{\mathbb R}}
\newcommand{\C}{{\mathbb C}}
\newcommand{\F}{{\mathbb F}}
\renewcommand{\iff}{\: \Leftrightarrow\: }
\renewcommand{\bar}{\overline}
\numberwithin{equation}{section}
\DeclareMathOperator{\orb}{orb}
\DeclareMathOperator{\Per}{Per} 
\begin{document}
\title[On linear chaos in the spaces of vanishing and convergent sequences]
{On linear chaos in the spaces of vanishing\\ and convergent sequences}
\author[Marat V. Markin]{Marat V. Markin}
\address{
Department of Mathematics\newline
California State University, Fresno\newline
5245 N. Backer Avenue, M/S PB 108\newline
Fresno, CA 93740-8001, USA
}
\email[corresponding author]{mmarkin@csufresno.edu}
\author{Gabriel Martinez Lazaro}
\email{gmartinez\_laz@mail.fresnostate.edu}
\author{Edward S. Sichel}
\email{edsichel@mail.fresnostate.edu}
\subjclass{Primary 47A16, 47B37; Secondary 47A10}
\keywords{Hypercyclic vector, periodic point, hypercyclic operator, chaotic operator, spectrum}
\begin{abstract}
We study the chaoticity of bounded and unbounded weighted backward shifts in the space $c_0(\N)$ of vanishing sequences via a novel straightforward approach based on a newly found \textit{sufficient condition for linear chaos} and show that their extensions to the space $c(\N)$ of convergent sequences are not even hypercyclic. Thus, we furnish bounded and unbounded linear chaotic operators in $c(\N)$ in a different way: as conjugates to the weighted backward shifts in $c_0(\Z_+)$ via a homeomorphic isomorphism between the two spaces.
\end{abstract}
\maketitle
\epigraph{\textit{It turns out that an eerie type of chaos can lurk just behind a facade of order - and yet, deep inside the chaos lurks an even eerier type of order.
}}{Douglas R. Hofstadter}

\section[Introduction]{Introduction}\label{intro}

We study the chaoticity of the weighted backward shifts in the space 
\[ 
c_0(\N):= \left\{ x:= (x_k)_{k \in \N}\in \F^\N \,\middle|\, \lim_{k \to \infty}x_k = 0 \right\}
\]
($\N:=\left\{1,2,3,\dots\right\}$ is the set of \textit{natural numbers}, $\F:=\R$ or $\F:=\C$) of $\F$-termed vanishing sequences, which are \textit{bounded}
\[
c_0(\N)\ni x:=\left(x_k\right)_{k\in \N}\mapsto A_wx:=w\left(x_{k+1}\right)_{k\in \N}\in c_0(\N)\quad (|w|>1),
\]
introduced in \cite{Rolewicz} (see also \cite{Godefroy-Shapiro1991}), or \textit{unbounded} 
\[
A_wx:=\left(w^kx_{k+1}\right)_{k\in \N} \quad (|w|>1)
\]
with maximal domain
\[
D(A_w):=\left\{ x:=\left(x_k\right)_{k\in \N} \in c_0(\N) \,\middle|\, \left(w^kx_{k+1}\right)_{k\in \N}\in c_0(\N) \right\},
\]
introduced in \cite{arXiv:1811.06640}, via a novel straightforward approach based on a newly found \textit{Sufficient Condition for Linear Chaos} \cite[Theorem $3.2$]{arXiv:2106.14872} not requiring explicit construction of a hypercyclic vector and a dense set of periodic points. 

We furnish concise proofs for the chaoticity of these linear operators along with their powers and analyze their spectral structure.

We further show that the extensions of the aforementioned weighted backward shifts to the space
\[ 
c(\N):= \left\{ x:= (x_k)_{k \in \N}\in \F^\N \,\middle|\, \exists \lim_{k \to \infty}x_k \in \F \right\}
\]
of $\F$-termed convergent sequences are not even hypercyclic.

Thus, we furnish bounded and unbounded linear chaotic operators in $c(\N)$ in a different way: as conjugates to the weighted backward shifts in 
\[ 
c_0(\Z_+):= \left\{ x:= (x_k)_{k \in \N}\in \F^{\Z_+}\,\middle|\,\lim_{k \to \infty}x_k = 0  \right\}
\]
($\Z_+:=\left\{0,1,2,\dots\right\}$ is the set of \textit{nonnegative integers}) via a homeomorphic isomorphism between the two spaces.

\begin{rems}\
\begin{itemize}
\item As follows from the inclusions
\[
c_0(\N)\subset c(\N)\subset l_\infty(\N),
\]
the space $c(\N)$ lives between the space $c_0(\N)$, where linear chaos is known to exist, and the space
\[ 
l_\infty(\N):= \left\{ x:= (x_k)_{k \in \N}\in \F^\N \,\middle|\, \sup_{k \in \N}|x_k| <\infty \right\}
\]
of $\F$-termed bounded sequences, where even hypercyclicity has no place.
\item Henceforth, we use the notations $c_0(\N)$, $c(\N)$ for the spaces of vanishing and convergent sequences over $\N$, respectively, and the notations $c_0(\Z_+)$, $c(\Z_+)$ for their counterparts over $\Z_+$. We also use the shorter notations $c_0$ and $c$ whenever the indexing set is implied contextually.
\end{itemize}
\end{rems}

The chaoticity of the bounded weighted backward shifts in $c_0(\N)$ was first established in \cite{Rolewicz,Godefroy-Shapiro1991}. We reestablish this result in Theorem \ref{BLCc0} via the \textit{Sufficient Condition for Linear Chaos} \cite[Theorem $3.2$]{arXiv:2106.14872} (cf. the original constructive proofs for hypercyclicity  \cite{Rolewicz} and dense periodicity \cite{Godefroy-Shapiro1991}, respectively) and go beyond by proving the chaoticity of all natural powers of these operators and analyzing their spectral structure. In Theorem \ref{ULCc0}, via the aforementioned sufficient condition, we reestablish the chaoticity for the unbounded weighted backward shifts in $c_0(\N)$, first introduced and studied in \cite{arXiv:1811.06640} (cf. the original constructive proof), prove the chaoticity of all their natural powers, and replicate the analysis of their spectral structure provided in \cite{arXiv:1811.06640}.

All the results of Sections \ref{WBSc} and \ref{LCc} are entirely novel.


\section[Preliminaries]{Preliminaries}

The subsequent preliminaries are essential for our discourse.



\subsection{Spaces $c_0$ and $c$}\

The spaces $c_0(\N)$ and $c(\N)$ are infinite-dimensional separable Banach spaces relative to $\infty$-norm
	 \[ 
	 x:= (x_k)_{k\in \N} \mapsto \|x\|_\infty := \sup_{k \in \N} |x_k|, 
	 \]
	the former being a closed \textit{hyperplane}, which is a \textit{nowhere dense} subspace, of the latter (see, e.g., \cite{Markin2027EFA,Markin2026EOT}).

The \textit{limit functional}
\begin{equation}\label{lf}
c(\N) \ni x:= (x_n)_{n\in \N} \mapsto l(x):= \lim_{n \to \infty} x_n \in \F,
\end{equation}
is a bounded linear functional on $c(\N)$ with $\ker l = c_0(\N)$ (see, e.g., \cite{Markin2027EFA,Markin2026EOT}).

Relative to the standard Schauder basis $\left\{e_n:=\left(\delta_{nk}\right)_{k\in \N}\right\}_{n\in \N}$ for $c_0(\N)$, where $\delta_{nk}$ is the \textit{Kronecker delta}, each $x := \left(x_k\right)_{k \in \N}\in c_0(\N)$ allows the Schauder expansion
\[
x = \sum_{k=1}^\infty c_k(x) e_k
\]
with the \textit{coordinates} $c_k(x)=x_k$, $k\in \N$.

Relative to the standard Schauder basis $\left\{e_n\right\}_{n\in \Z_+}$ for $c$, where
\[
e_0 := (1,1,1,\dots)\quad \text{and}\quad e_n:=\left(\delta_{nk}\right)_{k\in \N},\ n\in\N,
\]
each $x := \left(x_k\right)_{k \in \N}\in c(\N)$ has the Schauder expansion
\begin{equation*}
x = \sum_{k=0}^\infty c_k(x) e_k
\end{equation*}
with the \textit{coordinates}
\begin{equation*}
c_0(x)=l(x)\quad \text{and}\quad c_k(x)=x_k-l(x),\ k\in \N.
\end{equation*}

See, e.g., \cite{Markin2027EFA,Markin2026EOT,MarkSogh2021}.

\subsection{Spectrum}\

The spectrum $\sigma(A)$ of a closed linear operator $A$ in a complex Banach space $X$ is the union of the following pairwise disjoint sets:
\begin{equation*}
\begin{split}
& \sigma_p(A):=\left\{\lambda\in \C \,\middle|\,A-\lambda I\ \text{is \textit{not injective}, i.e., $\lambda$ is an \textit{eigenvalue} of $A$} \right\},\\
& \sigma_c(A):=\left\{\lambda\in \C \,\middle|\,A-\lambda I\ \text{is \textit{injective},
\textit{not surjective}, and $\overline{R(A-\lambda I)}=X$} \right\},\\
& \sigma_r(A):=\left\{\lambda\in \C \,\middle|\,A-\lambda I\ \text{is \textit{injective} and $\overline{R(A-\lambda I)}\neq X$} \right\}
\end{split}
\end{equation*}
($R(\cdot)$ is the \textit{range} of an operator and $\overline{\cdot}$ is the \textit{closure} of a set), called the \textit{point}, \textit{continuous} and \textit{residual spectrum} of $A$, respectively (see, e.g., \cite{Dun-SchI,Markin2026EOT}).

\subsection{Hypercyclicity and Linear Chaos}\

\begin{defn}[Hypercyclic and Chaotic Linear Operators]\ \\ \noindent
For a (bounded or unbounded) linear operator $A$ in a (real or complex) Banach space $X$, a nonzero vector 
\begin{equation*}
x\in C^\infty(A):=\bigcap_{n=0}^{\infty}D(A^n)
\end{equation*}
($D(\cdot)$ is the \textit{domain} of an operator, $A^0:=I$, $I$ is the \textit{identity operator} on $X$) is called \textit{hypercyclic} if its \textit{orbit} under $A$
\[
\orb(x,A):=\left\{A^nx\right\}_{n\in\Z_+}
\]
is dense in $X$, i.e.,
\[
\bar{\orb(x,A)}=X.
\]

Linear operators possessing hypercyclic vectors are said to be \textit{hypercyclic}.

If there exists an $N\in \N$ and a vector $x\in D(A^N)$ such that
\[
A^Nx = x,
\]
then this vector is called a \textit{periodic point} for the operator $A$ of period $N$. If $x\ne 0$, we say that $N$ is a \textit{period} for $A$.

A hypercyclic linear operator $A$ such that the set $\Per(A)$ of its periodic points is dense in $X$, i.e.,
\[
\bar{\Per(A)}=X,
\]
is called \textit{chaotic}.
\end{defn}

See \cite{Devaney,Godefroy-Shapiro1991,B-Ch-S2001}.

\begin{exmps}\label{exmps}\
\begin{enumerate}[label=\arabic*.]
    \item On the infinite-dimensional separable Banach space $X:=c_0$ or $X:=l_p$ ($1\le p<\infty$), the classical Rolewicz weighted backward shifts
    $$ X \ni x:=\left(x_k\right)_{k\in \N} \mapsto A_wx:= w\left(x_{k+1}\right)_{k\in \N} \in X, $$
    where $w\in \F$ with $|w|>1$, are \textit{chaotic} \cite{Rolewicz,Godefroy-Shapiro1991}.
   \item On the sequence space
   \[
X:=\left\{ \left(x_k\right)_{k\in \N}\in \F^\N\,\middle|\, 
\sum_{k=1}^\infty \left|\frac{x_{k}}{k}- \frac{x_{k+1}}{k+1}\right|<\infty\ \text{and}\ \lim_{k\to\infty}\frac{x_k}{k} = 0\right\},  
  \]
which is an infinite-dimensional separable Banach space relative to the norm
    $$X \ni x:=\left(x_k\right)_{k\in \N}\mapsto \|x\| :=\sum_{k=1}^\infty \left| \frac{x_{k}}{k}- \frac{x_{k+1}}{k+1} \right|, $$
 the weighted backward shifts
    $$ X \ni x:=\left(x_k\right)_{k\in \N} \mapsto A_wx:= w\left(x_{k+1}\right)_{k\in \N} \in X, $$
where $w\in \F$ with $|w|=1$, 
    are \textit{hypercyclic} but \textit{not} densely periodic,
    and hence, \textit{not}  chaotic \cite{arXiv:2209.04515}
(see also \cite{Grosse-Erdmann2000} and \cite[Exercise $4.1.3$]{Grosse-Erdmann-Manguillot}).
\item On an infinite-dimensional separable Banach space $(X,\|\cdot\|)$, 
the identity operator $I$ is densely periodic but \textit{not}
hypercyclic, and hence, \textit{not}  chaotic.
\end{enumerate}
\end{exmps}

\begin{rems}\label{HCrems}\
\begin{itemize}
\item In the prior definition of hypercyclicity, the underlying space is necessarily
\textit{infinite-dimensional} and \textit{separable} (see, e.g., \cite{Grosse-Erdmann-Manguillot}).
\item For a hypercyclic linear operator $A$, the set $HC(A)$ of its hypercyclic vectors is necessarily dense in $X$, and hence, the more so, is the subspace $C^\infty(A)\supseteq HC(A)$. In particular, this implies that, for any $n\in \N$,
the operator $A^n$ needs to be \textit{densely defined}, i.e.,
\[
\bar{D\left(A^n\right)}=X.
\]
\item Observe that
\[
\Per(A)=\bigcup_{N=1}^\infty \Per_N(A),
\]
where 
\[
\Per_N(A)=\ker(A^N-I),\ N\in \N
\]
is the \textit{subspace} of $N$-periodic points of $A$.
\item As immediately follows from the inclusions
\begin{equation*}
HC(A^n)\subseteq HC(A),\ \Per(A^n)\subseteq \Per(A), n\in \N,
\end{equation*}
if, for a linear operator $A$ in an infinite-dimensional separable Banach space $X$ and some $n\ge 2$, the operator $A^n$ is hypercyclic/chaotic, then $A$ is also hypercyclic/chaotic, respectively.
\end{itemize} 
\end{rems}

Prior to \cite{B-Ch-S2001,deL-E-G-E2003}, the notions of linear hypercyclicity and chaos had been studied exclusively for \textit{continuous} linear operators on Fr\'echet spaces, in particular for \textit{bounded} linear operators on Banach spaces (for a comprehensive survey, see \cite{Bayart-Matheron,Grosse-Erdmann-Manguillot}).

The following extension of \textit{Kitai's criterion} for bounded linear operators (see \cite{Kitai1982,Gethner-Shapiro1987}) is a useful shortcut for establishing hypercyclicity for (bounded or unbounded) linear operators without explicitly furnishing a hypercyclic vector as in \cite{Rolewicz}.

\begin{thm}[Sufficient Condition for Hypercyclicity {\cite[Theorem $2.1$]{B-Ch-S2001}}]\label{SCH}\ \\
Let $X$ be a (real or complex) infinite-dimensional separable Banach space and A be a densely defined linear operator in X such that each power $A^n$, $n \in \N$, is a closed operator. If there exists a set
\[
Y\subseteq C^\infty(A):=\bigcap_{n=1}^\infty D(A^n)
\]
dense in $X$ and a mapping $B:Y\to Y$ such that
\begin{enumerate}
\item $\forall\, x\in Y:\ ABx=x$ and
\item $\forall\, x\in  Y:\ A^n x, B^n x \to 0,\ n \to \infty,$ 
\end{enumerate}
then the operator $A$ is hypercyclic.
\end{thm}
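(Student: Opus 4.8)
The plan is to construct a hypercyclic vector for $A$ explicitly as the sum of a rapidly converging series, following the template of the Gethner--Shapiro proof of Kitai's criterion but accommodating the unboundedness of $A$ through the closedness of its powers. I prefer this direct route over a Birkhoff-transitivity argument because, for an unbounded $A$, the sets $(A^n)^{-1}(V)$ need not be open in $X$, so the usual Baire-category machinery does not transfer verbatim; a hands-on construction sidesteps this issue. First I would record the algebraic consequences of hypothesis (1): since $B$ maps $Y$ into $Y$ and $ABx=x$ on $Y$, an easy induction gives, for all $y\in Y$ and integers $0\le p\le q$,
\[
A^p B^q y = B^{\,q-p} y, \qquad\text{in particular}\qquad A^q B^q y = y,
\]
and $A^p B^q y = A^{\,p-q}y$ when $p\ge q$. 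Since $X$, and hence the dense set $Y$, is separable, I would then fix a countable dense subset $\{y_j\}_{j\in\N}$ of $Y$.

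The heart of the argument is an inductive choice of a strictly increasing sequence $n_1<n_2<\cdots$ of positive integers (with $n_0:=0$) such that, at the $k$th stage,
\begin{enumerate}
\item $\bigl\|B^{\,n_k-l}y_k\bigr\|<2^{-k}$ for every integer $0\le l\le n_{k-1}$, and
\item $\sum_{i<k}\bigl\|A^{\,n_k-n_i}y_i\bigr\|<2^{-k}$.
\end{enumerate}
Both requirements can be met by taking $n_k$ sufficiently large: each involves only finitely many vectors of $Y$, for each of which hypothesis (2) guarantees $A^n y, B^n y\to 0$. I would then set $x:=\sum_{k=1}^\infty B^{\,n_k}y_k$, the series converging absolutely in $X$ by condition (1) with $l=0$.

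The main obstacle --- and the only place where the hypotheses genuinely exceed the bounded case --- is verifying that $x\in C^\infty(A)$ together with the termwise identity $A^m x=\sum_k A^m B^{\,n_k}y_k$ for every $m\in\N$. Here I would argue as follows: by the identities above, for fixed $m$ the terms with $n_k\ge m$ equal $B^{\,n_k-m}y_k$ and are dominated by $2^{-k}$ once $n_{k-1}\ge m$ (condition (1)), while the remaining finitely many terms are well defined; hence $\sum_k A^m B^{\,n_k}y_k$ converges in $X$. Since each power $A^m$ is \emph{closed} and the partial sums $s_N:=\sum_{k\le N}B^{\,n_k}y_k\in Y\subseteq C^\infty(A)$ satisfy $s_N\to x$ while $A^m s_N\to\sum_k A^m B^{\,n_k}y_k$, closedness forces $x\in D(A^m)$ with $A^m x$ equal to that sum. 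As $m$ is arbitrary, $x\in C^\infty(A)$, and this is precisely the step for which the hypothesis that every power is closed is indispensable.

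Finally I would estimate the orbit. Splitting $A^{n_j}x=\sum_k A^{n_j}B^{\,n_k}y_k$ at $k=j$ and invoking the identities, the $k=j$ term is exactly $y_j$, so
\[
\bigl\|A^{n_j}x-y_j\bigr\|\le\sum_{k<j}\bigl\|A^{\,n_j-n_k}y_k\bigr\|+\sum_{k>j}\bigl\|B^{\,n_k-n_j}y_k\bigr\|<2^{-j}+2^{-j},
\]
the two sums being controlled by conditions (2) and (1), respectively. Thus $\orb(x,A)$ comes within $2^{\,1-j}$ of each $y_j$; since $\{y_j\}$ is dense in $X$, the orbit is dense, and $x\ne 0$ because a dense orbit cannot reduce to $\{0\}$ in the infinite-dimensional space $X$. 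Hence $x$ is a hypercyclic vector and $A$ is hypercyclic.
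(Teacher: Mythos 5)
The paper does not prove this theorem at all: it is imported verbatim, with attribution, from B\`es--Chan--Seubert \cite[Theorem~2.1]{B-Ch-S2001} and then used as a black box (the paper's actual workhorse is its strengthening, Theorem \ref{SCC}). So there is no internal proof to compare yours against; judged on its own, your construction is correct, and it is in fact the same Gethner--Shapiro/Kitai-style argument that the cited source employs: commutation identities derived from $ABx=x$ and $B(Y)\subseteq Y$, an inductively chosen sparse sequence $(n_k)$, the vector $x=\sum_k B^{n_k}y_k$, closedness of each power $A^m$ to obtain $x\in C^\infty(A)$ together with termwise evaluation of $A^mx$, and the split at $k=j$ to estimate $\|A^{n_j}x-y_j\|$. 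Your opening remark about why Birkhoff transitivity does not transfer to unbounded operators, and your identification of the closedness hypothesis as precisely what rescues the domain issues, are both on target.

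Two small repairs are needed, neither fatal. First, your partial sums $s_N=\sum_{k\le N}B^{n_k}y_k$ need not lie in $Y$: the theorem assumes only that $Y$ is a \emph{set} mapped into itself by $B$, not a linear subspace, so $Y$ need not be closed under addition. What you actually need, and what is true, is $s_N\in C^\infty(A)$: each summand lies in $Y\subseteq C^\infty(A)$, and $C^\infty(A)=\bigcap_{n}D(A^n)$ \emph{is} a linear subspace; linearity of $A^m$ on $D(A^m)$ then gives $A^ms_N=\sum_{k\le N}A^mB^{n_k}y_k$, and your closedness argument runs unchanged. Second, the concluding sentence (the orbit comes within $2^{1-j}$ of each $y_j$, hence is dense) tacitly requires that every ball of $X$ contain points $y_j$ of arbitrarily large index $j$, so that for given $z\in X$ and $\eps>0$ one can make $\|y_j-z\|<\eps/2$ and $2^{1-j}<\eps/2$ simultaneously; this holds because the infinite-dimensional space $X$ has no isolated points, so every ball meets the dense set $\{y_j\}_{j\in\N}$ in infinitely many points. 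With these two touch-ups the proof is complete.
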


The subsequent newly established sufficient condition for linear chaos \cite{arXiv:2106.14872}, obtained via strengthening the second hypothesis of the prior statement, serves as a shortcut for establishing chaoticity for bounded or unbounded linear operators without explicitly furnishing a hypercyclic vector and a dense set of periodic points and is fundamental for our discourse.

\begin{thm}[Sufficient Condition for Linear Chaos {\cite[Theorem $3.2$]{arXiv:2106.14872}}]\label{SCC}\ \\
Let $(X,\|\cdot\|)$ be a  (real or complex) infinite-dimensional separable Banach space and $A$ be a densely defined linear operator in $X$ such that each power $A^{n}$, $n\in\N$, is a closed operator. If there exists a set
\[
Y\subseteq C^\infty(A):=\bigcap_{n=1}^\infty D(A^n)
\]
dense in $X$ and a mapping $B:Y\to Y$ such that
\begin{enumerate}
\item $\forall\, x\in Y:\ ABx=x$ and
\item $\forall\, x\in Y\  \exists\, \alpha=\alpha(x)\in (0,1), c=c(x,\alpha)>0\ \forall\, n\in \N:$
\begin{equation*}
\max\left(\|A^nx\|,\|B^nx\|\right)\le c\alpha^n,
\end{equation*}
or equivalently,
\begin{equation}\label{(2(b))}
\forall\, x\in Y:\ \max\left(r(A,x),r(B,x)\right)<1,
\end{equation}
where 
\[
r(A,x):=\limsup_{n\to \infty}{\|A^nx\|}^{1/n}\quad \text{and}\quad
r(B,x):=\limsup_{n\to \infty}{\|B^nx\|}^{1/n},
\]
\end{enumerate}
then the operator $A$ is chaotic.
\end{thm}

For applications, see, e.g., \cite{arXiv:2106.09682,arXiv:2209.04515}.

\begin{rem}
The hypercyclic but not chaotic weighted backward shifts from the second of Examples \ref{exmps}, being subject to the \textit{Sufficient Condition for Linear Hypercyclicity} (Theorem \ref{SCH}), are not subject to the \textit{Sufficient Condition for Linear Chaos} (Theorem \ref{SCC}) \cite{arXiv:2209.04515}.
\end{rem}

We also need the following statements.

\begin{cor}[Chaoticity of Powers {\cite[Corollary $4.3$]{arXiv:2106.14872}}]\label{CP}\ \\
For a chaotic linear operator $A$ in a  (real or complex) infinite-dimensional separable Banach space subject to the \textit{Sufficient Condition for Linear Chaos} (Theorem \ref{SCC}), each power $A^n$, $n\in \N$, is chaotic.
\end{cor}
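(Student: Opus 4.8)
The plan is to verify that, for each fixed $n\in\N$, the power $A^n$ itself satisfies the hypotheses of the \emph{Sufficient Condition for Linear Chaos} (Theorem \ref{SCC}); its chaoticity then follows by a direct application of that theorem. Since $A$ is subject to Theorem \ref{SCC}, there is a dense set $Y\subseteq C^\infty(A)$ and a mapping $B:Y\to Y$ meeting conditions (1) and (2) therein. I would reuse the \emph{same} set $Y$ together with the iterated mapping $B^n:Y\to Y$ (well defined because $B$ maps $Y$ into itself) as the data witnessing the condition for $A^n$.

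First I would dispatch the structural hypotheses. Because $C^\infty(A)=\bigcap_{k=1}^\infty D(A^k)\subseteq\bigcap_{m=1}^\infty D(A^{nm})=C^\infty(A^n)$, the set $Y$ lies in $C^\infty(A^n)$ and, being dense, makes $A^n$ densely defined. Moreover, each power $(A^n)^m=A^{nm}$ is among the powers of $A$, all of which are closed by hypothesis, so every power of $A^n$ is closed.

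The verification of condition (1) is the step demanding the most care. I would prove $A^nB^nx=x$ for all $x\in Y$ by induction on $n$, peeling off one factor at a time: writing $A^nB^n=A^{n-1}(AB)B^{n-1}$ and using that $B^{n-1}x\in Y$ together with the identity $ABy=y$ on $Y$ reduces $A^nB^nx$ to $A^{n-1}B^{n-1}x$, which closes the induction. Condition (2) is then immediate: with $\alpha'(x):=\alpha(x)^n\in(0,1)$ and $c'(x):=c(x,\alpha)$, the estimate from condition (2) for $A$ gives $\max(\|A^{nm}x\|,\|B^{nm}x\|)\le c\,\alpha^{nm}=c'(\alpha')^m$ for every $m\in\N$, which is precisely condition (2) for $A^n$ under the substitution $\alpha\mapsto\alpha^n$.

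Having confirmed all hypotheses, Theorem \ref{SCC} applies to $A^n$ and yields its chaoticity. I expect the main subtlety to be the bookkeeping in condition (1)---ensuring each intermediate iterate $B^{j}x$ remains in $Y$, so that the left-inverse relation $ABy=y$ may legitimately be invoked at each step and every iterated application of $A$ stays within $C^\infty(A)$---rather than the decay estimate. Indeed, the latter transfers verbatim, which is exactly where the \emph{geometric} bound of Theorem \ref{SCC}, as opposed to the mere vanishing $A^nx,B^nx\to 0$ that would suffice only for hypercyclicity, pays off: a product of two geometric rates is again geometric, whereas the mere convergence to $0$ of subsequences along the arithmetic progression $\{nm\}_{m\in\N}$ would not by itself re-establish a dense set of periodic points for $A^n$.
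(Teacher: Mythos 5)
Your proof is correct, and it is precisely the intended argument: the paper itself imports this corollary from \cite{arXiv:2106.14872} without reproducing the proof, and the proof given there proceeds exactly as you do, applying the \emph{Sufficient Condition for Linear Chaos} (Theorem \ref{SCC}) to $A^n$ with the same dense set $Y$ and the iterated map $B^n:Y\to Y$, using $(A^n)^m=A^{nm}$ for closedness, the induction $A^nB^nx=A^{n-1}(AB)B^{n-1}x=A^{n-1}B^{n-1}x$ for condition (1), and the substitution $\alpha\mapsto\alpha^n$ for condition (2). Your closing observation---that the geometric bound, unlike mere vanishing along $\{nm\}_{m\in\N}$, is what makes the transfer to powers work---is also the correct reason this corollary requires the chaos condition rather than the hypercyclicity one.
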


\begin{thm}[Bourdon {\cite[Theorem $2.54$]{Grosse-Erdmann-Manguillot}}]\label{BT}\ \\
For a bounded linear hypercyclic operator $A$ on an infinite-dimensional separable Banach space $X$ and a nonzero polynomial $p(\lambda):=\sum_{k=0}^n c_k\lambda^k$,
$n\in \Z_+$, $c_k\in \F$, $k=0,\dots,n$, the range $R(p(A))$ of the operator 
$p(A):=\sum_{k=0}^n c_kA^k$ is dense in $X$, i.e.,
\[
\bar{R(p(A))}=X.
\]
\end{thm}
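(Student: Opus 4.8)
The plan is to pass to the Banach-space adjoint and exploit the standard duality between dense range and injectivity. For a bounded linear operator $T$ on $X$ one has $\bar{R(T)}=X$ if and only if $T^*$ is injective, since $\bar{R(T)}={}^\perp(\ker T^*)$ and, by the Hahn--Banach theorem, ${}^\perp S=X$ forces $S=\{0\}$. Applying this with $T:=p(A)$ and noting that $p(A)^*=p(A^*)$ is the polynomial in $A^*$ with the same coefficients, the assertion $\bar{R(p(A))}=X$ reduces to showing that $p(A^*)$ is \emph{injective}.

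The crux is the eigenvalue-freeness of the adjoint of a hypercyclic operator, $\sigma_p(A^*)=\emps$. First I would fix a hypercyclic vector $x\in HC(A)$, so that $\orb(x,A)=\left\{A^nx\right\}_{n\in\Z_+}$ is dense in $X$, and argue by contradiction. Suppose $A^*\phi=\lambda\phi$ for some $\lambda\in\F$ and some $\phi\ne 0$. Being a nonzero continuous linear functional, $\phi$ is a surjection of $X$ onto $\F$ and hence, by the Open Mapping Theorem, carries dense sets to dense sets; thus $\left\{\la A^nx,\phi\ra\right\}_{n\in\Z_+}$ is dense in $\F$. On the other hand,
\[
\la A^nx,\phi\ra=\la x,(A^*)^n\phi\ra=\lambda^n\la x,\phi\ra,\quad n\in\Z_+,
\]
and the set $\left\{\lambda^n\la x,\phi\ra\right\}_{n\in\Z_+}$ is never dense in $\F$: it is bounded when $|\lambda|\le 1$, takes only the discrete moduli $|\la x,\phi\ra|\,|\lambda|^n$ when $|\lambda|>1$, and collapses to $\{0\}$ when $\la x,\phi\ra=0$. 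This contradiction shows that $A^*$ has no eigenvalues.

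With eigenvalue-freeness in hand, over the complex field the conclusion is immediate: factoring $p(\lambda)=c\prod_{j=1}^m(\lambda-\lambda_j)$ ($c\ne 0$) gives $p(A^*)=c\prod_{j=1}^m(A^*-\lambda_jI)$, a composition of the injective operators $A^*-\lambda_jI$ (injective precisely because $\lambda_j\notin\sigma_p(A^*)$), and a composition of injective operators is injective. For the real field one factors $p$ into real linear and irreducible quadratic factors; the linear factors are handled exactly as above, while for an irreducible quadratic factor $q(\lambda)=\lambda^2+b\lambda+c$ I would run the analogous orbit argument with the pair of functionals $\phi$ and $A^*\phi$ (linearly independent, again because $A^*$ has no real eigenvalue), mapping the dense orbit into $\F^2$ via $y\mapsto\left(\la y,\phi\ra,\la y,A^*\phi\ra\right)$ and observing that the image is an orbit of the real $2\times 2$ matrix realizing $q$, whose non-real complex-conjugate eigenvalues confine every orbit to a logarithmic spiral or a single circle, never dense in the plane. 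The main obstacle is thus the eigenvalue-freeness of $A^*$ together with its quadratic analogue over $\R$; once these density-versus-orbit-geometry dichotomies are established, the passage through the duality and the factorization is routine.
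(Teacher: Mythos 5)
The paper does not actually prove this statement: it is imported as a known result, cited to Grosse-Erdmann and Peris Manguillot (Theorem 2.54), so there is no in-paper argument to compare yours against. Your proof is correct and is essentially the standard proof of Bourdon's theorem from that source: the duality reduction $\overline{R(p(A))}=X \iff \ker p(A)^{*}=\ker p(A^{*})=\{0\}$, the orbit argument giving $\sigma_p(A^{*})=\emptyset$ (density of $\{\langle A^n x,\phi\rangle\}_{n\in\Z_+}$ versus the non-density of a geometric sequence $\{\lambda^n\langle x,\phi\rangle\}_{n\in\Z_+}$), and factorization of $p$ over $\C$, with injectivity of compositions of injective operators. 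Your treatment of the real case --- running the two-dimensional orbit argument on $\mathrm{span}\{\phi, A^{*}\phi\}$ for a putative nonzero $\phi\in\ker q(A^{*})$, where the image orbit evolves under the companion matrix of the irreducible quadratic $q$ and its geometric moduli preclude density in $\R^2$ --- is sound and differs only cosmetically from the textbook version, which routes the same two-dimensional argument through the complexification of $A^{*}$; one small simplification worth noting is that mapping dense sets to dense sets needs only continuity and surjectivity of the functional, not the Open Mapping Theorem.
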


\begin{rem}\label{remB}
Consistently with necessary conditions for hypercyclicity \cite[Proposition $4.1$]{arXiv:2106.14872}, the latter implies that, for a bounded linear hypercyclic operator $A$ on an infinite-dimensional separable Banach space $X$ and an arbitrary $\lambda\in \F$, with $p(\mu):=\mu-\lambda$, $\mu\in \F$, the range $R(A-\lambda I)$ of the operator $A-\lambda I$ is \textit{dense} in $X$, i.e.,
\[
\bar{R(A-\lambda I)}=X.
\]
\end{rem}

\section{Linear Chaos in $c_0$}

As is noted in the Introduction (Section \ref{intro}) and in Examples \ref{exmps}, the bounded weighted backward shifts in $c_0$ were introduced and studied in  \cite{Rolewicz,Godefroy-Shapiro1991}. We reestablish this result in the following statement via the \textit{Sufficient Condition for Linear Chaos} (Theorem \ref{SCC}) \cite[Theorem $3.2$]{arXiv:2106.14872} and further prove the chaoticity of all natural powers of these operators as well as analyze their spectral structure.

\begin{thm}[Bounded Linear Chaos on $c_0$]\label{BLCc0}\ \\
For an arbitrary $w\in \F$ with $|w|>1$, the bounded linear weighted backward shift operator 
\[
c_0\ni x:=\left(x_k\right)_{k\in \N}\mapsto A_wx:=w\left(x_{k+1}\right)_{k\in \N}\in c_0
\]
on the space $c_0$ is chaotic as well as its every power $A_w^n$, $n\in \N$, and, provided the underlying space is complex (i.e., $\F=\C$),
\[
\sigma\left(A_w\right)=\left\{ \lambda\in \C \,\middle|\, |\lambda|\le |w| \right\}
\]
with
\[
\sigma_p\left(A_w\right)=\left\{ \lambda\in \C \,\middle|\, |\lambda|<|w| \right\}\quad \text{and}\quad
\sigma_c\left(A_w\right)=\left\{ \lambda\in \C \,\middle|\, |\lambda|=|w| \right\}.
\]
\end{thm}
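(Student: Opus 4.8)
The plan is to obtain chaoticity from the \textit{Sufficient Condition for Linear Chaos} (Theorem \ref{SCC}) and then read off the spectral structure using hypercyclicity. Since $A_w$ is bounded, $C^\infty(A_w)=c_0$, and I would take $Y:=\spa\left\{e_k\right\}_{k\in\N}$, the dense subspace of finitely supported sequences, together with the weighted forward shift
\[
Y\ni x:=(x_k)_{k\in\N}\mapsto Bx:=\left(0,w^{-1}x_1,w^{-1}x_2,\dots\right)\in Y,
\]
which maps $Y$ into itself and satisfies $A_wBx=x$ for every $x\in Y$, giving hypothesis (1).

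For hypothesis (2), fix $x\in Y$ with $\supp x\subseteq\{1,\dots,m\}$. Applying $A_w$ shifts the sequence to the left and multiplies by $w$, so $A_w^nx=w^n(x_{k+n})_{k\in\N}$ vanishes for $n\ge m$; applying $B$ shifts to the right and divides by $w$, so $\|B^nx\|_\infty=|w|^{-n}\|x\|_\infty$. Setting $\alpha:=1/|w|\in(0,1)$ (using $|w|>1$), the bound $\|B^nx\|_\infty=\|x\|_\infty\alpha^n$ holds for all $n$, while the finitely many nonzero terms $\|A_w^nx\|_\infty\le|w|^n\|x\|_\infty$ ($0\le n<m$) are absorbed by choosing $c=c(x,\alpha)>0$ large enough. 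Thus $\max\left(\|A_w^nx\|_\infty,\|B^nx\|_\infty\right)\le c\alpha^n$ for all $n\in\N$, and Theorem \ref{SCC} yields chaoticity of $A_w$. Chaoticity of every power $A_w^n$ is then immediate from Corollary \ref{CP}.

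For the spectrum (now $\F=\C$), I would first note $\|A_w\|=|w|$, whence $\sigma(A_w)\subseteq\left\{\lambda\in\C\,\middle|\,|\lambda|\le|w|\right\}$. To locate the point spectrum, the eigenvalue equation $A_wx=\lambda x$ reduces to the recursion $wx_{k+1}=\lambda x_k$, so that $x_k=(\lambda/w)^{k-1}x_1$. This sequence lies in $c_0\setminus\{0\}$ precisely when $|\lambda/w|<1$, i.e.\ $|\lambda|<|w|$, giving $\sigma_p(A_w)=\left\{\lambda\in\C\,\middle|\,|\lambda|<|w|\right\}$. Since $\sigma(A_w)$ is closed and contains this open disk, it contains its closure, and combined with the norm bound we obtain $\sigma(A_w)=\left\{\lambda\in\C\,\middle|\,|\lambda|\le|w|\right\}$.

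The final step is the cleanest: because $A_w$ is chaotic, hence hypercyclic, the Remark following Theorem \ref{BT} gives $\bar{R(A_w-\lambda I)}=c_0$ for every $\lambda\in\C$, so the residual spectrum is empty. Therefore $\sigma_c(A_w)=\sigma(A_w)\setminus\sigma_p(A_w)=\left\{\lambda\in\C\,\middle|\,|\lambda|=|w|\right\}$, as claimed. The only point requiring real care is the verification of hypothesis (2) of Theorem \ref{SCC} — recognizing that $A_w^nx$ terminates while $B^nx$ decays at the exact rate $|w|^{-n}$ — but once the emptiness of $\sigma_r$ is invoked, no separate injectivity or non-surjectivity analysis on the boundary circle $|\lambda|=|w|$ is needed.
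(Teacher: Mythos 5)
Your proof is correct. The chaoticity half is essentially the paper's own argument: the same dense subspace $Y=c_{00}=\spa\{e_k\}_{k\in\N}$, the same right inverse $B$ (the weighted forward shift) with $A_wBx=x$ on $Y$, the same observation that $A_w^nx$ terminates for finitely supported $x$ while $\|B^nx\|_\infty$ decays like $|w|^{-n}$, followed by Theorem \ref{SCC} and Corollary \ref{CP}. The spectral half is where you genuinely depart from the paper: there, the author simply factors $A_w=wL$, with $L$ the unweighted backward shift on $c_0$, and quotes the known spectral picture of $L$ (closed unit disk; open disk as point spectrum; unit circle as continuous spectrum), which then rescales by $w$. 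You instead derive everything in place: the bound $\|A_w\|=|w|$ confines the spectrum to the closed disk of radius $|w|$; the recursion $wx_{k+1}=\lambda x_k$ produces the candidate eigenvectors $x_k=(\lambda/w)^{k-1}x_1$, which lie in $c_0\setminus\{0\}$ exactly when $|\lambda|<|w|$, giving $\sigma_p(A_w)$; closedness of the spectrum fills in the boundary circle; and hypercyclicity together with the remark following Bourdon's Theorem (Theorem \ref{BT}) empties $\sigma_r(A_w)$, so that $\sigma_c(A_w)=\sigma(A_w)\setminus\sigma_p(A_w)$ because the three parts partition the spectrum --- note that for $|\lambda|=|w|$ the required non-surjectivity of $A_w-\lambda I$ is automatic, since a closed injective surjective operator has a bounded inverse, contradicting $\lambda\in\sigma(A_w)$. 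The trade-off: the paper's route is shorter but rests on an external citation for $\sigma(L)$; yours is self-contained and elegantly reuses the dynamics you just established (hypercyclicity killing the residual spectrum), which is the same mechanism the paper itself deploys in Section \ref{WBSc} to rule out hypercyclicity of the weighted backward shifts extended to $c$.
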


\begin{proof}

Let $w\in \F$ with $|w|>1$ be arbitrary and, for simplicity of notation, let $A:=A_w$.

Consider the subspace
\[
Y:=c_{00}:=\left\{ x:= (x_k)_{k \in \N}\in \F^\N \,\middle|\, \exists\, N\in \N\ \forall\,k\ge N:\ x_k=0 \right\},
\]
which is \textit{dense} in $c_0$ (see, e.g., \cite{Markin2026EOT,Markin2027EFA}), and the mapping $B:Y\to Y$, which is the restriction to $Y$ of the following \textit{bounded linear operator}
on $c_0$:
\[
c_0\ni x:=(x_k)_{k\in \N}\mapsto Bx:=w^{-1}\left(x_{k-1}\right)_{k\in \N}\in c_0 \quad (x_0:=0),
\]
(the right inverse of $A$) for which, as is easily seen,
\begin{equation}\label{BBS}
\|B\|={|w|}^{-1}<1
\end{equation}
(here and henceforth, whenever appropriate, the notation $\|\cdot\|$ is used for the \textit{operator norm}) (see, e.g., \cite{Markin2026EOT}) and
\begin{equation}\label{RI1}
ABx=x,\ x\in Y.
\end{equation}

Let us show that 
\[
\forall\, x\in Y\  \exists\, \alpha=\alpha(f)\in (0,1), c=c(f,\alpha)>0\ \forall\, n\in \N:\ \max\left(\|A^nx\|_\infty,\|B^nx\|_\infty\right)\le c\alpha^n.
\]

Let $x:= (x_k)_{k\in \N} \in Y$ be arbitrary. Then
\[
\exists\, N\in \N\ \forall\,k\ge N:\ x_k=0,
\]
and hence,
\[
\forall\,n\ge N:\ A^nx = 0,
\]
which implies that
\[
\forall\, \alpha\in (0,1)\ \exists\, c=c(x,\alpha)>0\ \forall\, n\in \N:\ 
\|A^nx\|_\infty\le c\alpha^n.
\]

By the submultiplicativity of the operator norm, in view of \eqref{BBS}, we also have: 
\begin{equation*}
\| B^n x\|_\infty \leq \| B^n \| \| x \|_\infty \leq \| B \|^n \| x \|_\infty = |w|^{-n} \| x \|_\infty.
\end{equation*}	

By the \textit{Sufficient Condition for Linear Chaos} (Theorem \ref{SCC}) and the \textit{Chaoticity of Powers Corollary} (Corollary \ref{CP}), we conclude that the operator $A$ is \textit{chaotic} as well as every power $A^n$, $n\in \N$. 

Provided the underlying space is complex, the spectral part of the statement
immediately follows from the fact that
\[
A=wL,
\]
where 
\[
c_0\ni x:=\left(x_k\right)_{k\in \N}\mapsto Lx:=\left(x_{k+1}\right)_{k\in \N}\in c_0
\]
is the backward shift operator on $c_0$, for which
\[
\sigma\left(L\right)=\left\{ \lambda\in \C \,\middle|\, |\lambda|\le 1 \right\}
\]
with
\[
\sigma_p\left(L\right)=\left\{ \lambda\in \C \,\middle|\, |\lambda|<1 \right\}\quad \text{and}\quad
\sigma_c\left(L\right)=\left\{ \lambda\in \C \,\middle|\, |\lambda|=1 \right\}
\]
(see, e.g., \cite{Dun-SchI,Markin2026EOT}).
\end{proof}

As is noted in the Introduction (Section \ref{intro}), the unbounded weighted backward shifts in $c_0$ (and $l_p$ ($1\le p<\infty$)) were introduced and studied in \cite{arXiv:1811.06640}. We reestablish this result in the subsequent theorem via the \textit{Sufficient Condition for Linear Chaos} (Theorem \ref{SCC}) \cite[Theorem $3.2$]{arXiv:2106.14872}, prove the chaoticity of all their natural powers, and replicate the analysis of their spectral structure provided in \cite{arXiv:1811.06640}. To this end, we need the following lemma.

\begin{lem}\label{lem}
Let $w\in \F$ and $|w|>1$. Then, for the weighted backward shift operator
\[
A_wx:=\left(w^kx_{k+1}\right)_{k\in \N}
\]
in the space $c_0$ with maximal domain
\[
D(A_w):=\left\{ x:=\left(x_k\right)_{k\in \N} \in c_0 \,\middle|\, \left(w^kx_{k+1}\right)_{k\in \N}\in c_0 \right\},
\]
each power 
\[
A_w^{n}x= \left( \left[ \prod_{j=k}^{k+n-1} w^j \right]x_{k+n} \right)_{k \in \N},
\]
$n\in\N$, with domain
\[
D(A_w^{n})= \left\{ x := \left(x_k \right)_{k \in \N} \in c_0 \,\middle|\, \left( \left[ \prod_{j=k}^{k+n-1} w^j \right]x_{k+n} \right)_{k \in \N} \in c_0 \right\}
\]
is a densely defined unbounded closed linear operator and the subspace
\[
C^\infty(A_w):=\bigcap_{n=1}^\infty D(A_w^n)
\]
of infinite differentiable relative to the operator $A_w$ vectors is dense in $c_0$.
\end{lem}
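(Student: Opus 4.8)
The plan is to verify three separate assertions about the unbounded weighted backward shift $A:=A_w$: the explicit formula for $A^n$ together with its domain, the claim that each $A^n$ is a densely defined closed unbounded operator, and the density of $C^\infty(A)$ in $c_0$. I would organize the argument so that the dense subspace $c_{00}$ of finitely supported sequences does most of the work, since these sequences are precisely the ones annihilated by high enough powers of $A$.

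First I would establish the power formula by induction on $n$. The base case $n=1$ is the definition. For the inductive step, assuming $A^{n-1}x=\bigl(\bigl[\prod_{j=k}^{k+n-2}w^j\bigr]x_{k+n-1}\bigr)_{k\in\N}$, I would apply $A$ once more and track the index shift, observing that applying the single shift to the $k$-th entry pulls in the $(k+1)$-th entry of $A^{n-1}x$ and multiplies by the extra factor $w^k$, so the product range extends from $\prod_{j=k+1}^{k+n-1}$ to $\prod_{j=k}^{k+n-1}$. The domain description $D(A^n)$ is then read off directly as those $x\in c_0$ for which the displayed sequence again lies in $c_0$; here I should be careful that $D(A^n)$ is genuinely the set on which all intermediate powers are defined and land in $c_0$, but since the product coefficients are nonzero, membership in $c_0$ of the final sequence forces membership of all intermediate ones, so the maximal-domain description is consistent.

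Next I would handle the three operator-theoretic properties. For dense definedness of each $A^n$, I note that $c_{00}\subseteq D(A^n)$: any finitely supported $x$ has $A^n x$ finitely supported (indeed eventually zero), hence in $c_0$. Since $c_{00}$ is dense in $c_0$, so is $D(A^n)$. For unboundedness, I would exhibit a sequence of unit vectors, say normalized multiples of the basis vectors $e_{k+n}$, whose images under $A^n$ have norm $\bigl|\prod_{j=k}^{k+n-1}w^j\bigr|=|w|^{\,nk+n(n-1)/2}\to\infty$ as $k\to\infty$, since $|w|>1$. For closedness, I would use the standard criterion: if $x^{(m)}\to x$ and $A^n x^{(m)}\to y$ in $c_0$, then coordinatewise convergence of $x^{(m)}$ forces $x^{(m)}_{k+n}\to x_{k+n}$ for each $k$, whence the $k$-th coordinate of $A^n x^{(m)}$ converges to $\bigl[\prod_{j=k}^{k+n-1}w^j\bigr]x_{k+n}$, which must equal $y_k$; thus $y=A^n x$ and $x\in D(A^n)$.

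Finally, for the density of $C^\infty(A)=\bigcap_{n=1}^\infty D(A^n)$, the clean observation is that $c_{00}\subseteq C^\infty(A)$: if $x$ is supported in $\{1,\dots,N\}$, then for every $n$ the sequence $A^n x$ is supported in $\{1,\dots,N\}$ and in particular lies in $c_0$, so $x\in D(A^n)$ for all $n$. Since $c_{00}$ is dense in $c_0$, the chain $c_{00}\subseteq C^\infty(A)\subseteq c_0$ yields density of $C^\infty(A)$. I expect the main obstacle to be purely bookkeeping rather than conceptual: getting the product index ranges in the power formula exactly right through the induction, and confirming that the maximal-domain convention makes the displayed $D(A^n)$ the correct domain of the $n$-fold composition (so that the closedness verification is checking the right operator). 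None of the steps should require more than the elementary structure of $c_0$ and the fact $|w|>1$.
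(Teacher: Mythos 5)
Your proposal is correct and follows essentially the same route as the paper's proof: the power formula by induction, dense definedness and density of $C^\infty(A)$ via the inclusion $c_{00}\subseteq C^\infty(A)\subseteq D(A^n)$, unboundedness tested on the basis vectors, and closedness via the sequential (coordinatewise) characterization. The one point to repair is your parenthetical justification of the maximal-domain description: nonzeroness of the product coefficients is \emph{not} sufficient. What actually forces the intermediate sequences into $c_0$ is that the discarded factors grow because $|w|>1$; e.g., for $n=2$, $|w^k x_{k+1}| = |w^{k-1}w^k x_{k+1}|\,|w|^{-(k-1)}$, where the first factor is bounded (it is an entry of the final sequence, assumed to lie in $c_0$) and $|w|^{-(k-1)}\to 0$. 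For weights that are merely nonzero the claim fails: with oscillating weights $v_{2j}=j$, $v_{2j+1}=1/j$ and $x_{k+1}=1/\log(k+1)$, the sequence $\left(v_k v_{k+1}x_{k+2}\right)_{k\in\N}$ lies in $c_0$ while $\left(v_k x_{k+1}\right)_{k\in\N}$ is unbounded. The paper itself asserts the identities for $D(A^2)$ and $D(A^3)$ without comment, so you are in fact more attentive to this subtlety than the paper is --- you just need to cite the growth of the weights, not their nonvanishing, as the reason.
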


\begin{proof}
Let $w\in \F$ with $|w|>1$ be arbitrary and, for simplicity of notation, let $A:=A_w$.

Since
\[
A^2x= \left( w^{k} w^{k+1}x_{k+2} \right)_{k \in \N}
\]
with domain
\begin{align*}
D(A^{2})
&= \left\{ x := \left(x_k \right)_{k \in \N} \in D(A) \,\middle|\, Ax \in D(A) \right\}
\\
&= \left\{ x := \left(x_k \right)_{k \in \N} \in c_0 \,\middle|\,  \left( w^{k} w^{k+1}x_{k+2} \right)_{k \in \N} \in c_0 \right\}
\end{align*}
and 
\[
A^3x= \left( w^{k} w^{k+1}w^{k+2}x_{k+3} \right)_{k \in \N}
\]
with domain
\begin{align*}
D(A^{3})
&= \left\{ x := \left(x_k \right)_{k \in \N} \in D(A^2) \,\middle|\, A^2x \in D(A) \right\}
\\
&= \left\{ x := \left(x_k \right)_{k \in \N} \in c_0 \,\middle|\, \left( w^{k} w^{k+1}w^{k+2}x_{k+3} \right)_{k \in \N} \in c_0 \right\}
\end{align*}
we infer inductively that, for each $n \in \N$
\[
A^nx =  \left( \left[ \prod_{j=k}^{k+n-1} w^j \right]x_{k+n} \right)_{k \in \N},  
\]
with domain
\[
D(A^{n})= \left\{ x := \left(x_k \right)_{k \in \N} \in c_0 \,\middle|\, \left( \left[ \prod_{j=k}^{k+n-1} w^j \right]x_{k+n} \right)_{k \in \N} \in c_0 \right\}
\]

We have:
\[
D(A^{n+1}) \subseteq D(A^n),\ n \in \N.
\]
		
Since the subspace $c_{00}$ is \textit{dense} in $c_0$ and 
\[
c_{00} \subseteq D(A^n),\ n \in \N,
\]
then each power $A^n$ ($n\in \N$) is densely defined and furthermore
		\[
		C^\infty(A) := \bigcap_{n=1}^\infty D(A^n)
		\] 
is also \textit{dense} in $c_0$.

Let $n\in \N$ and $e_m:=\left(\delta_{mk} \right)_{k\in \N}$, $m\in\N$, with 
$\|e_m\|_\infty=1$, $m\in \N$. Then, in view of $|w|>1$,
\begin{align*}
\forall\, m\in \N:\ \|A^ne_{m+n}\|& = \left\| \left( \left[ \prod_{j=k}^{k+n-1} w^j \right] \delta_{(m+n)(k+n)} \right)_{k\in\N}  \right\|_\infty = \prod_{j=m}^{m+n-1} |w|^j\\
&= |w|^{ \sum_{j=m}^{m+n-1}j }= |w|^{\frac{n(2m+n-1)}{2}} \to \infty,\ m \to \infty,
\end{align*}
the linear operator $A^n$ is \textit{unbounded}.

Let $n\in \N$ and a sequence $\left(x^{(m)}:=\left(x_k^{(m)} \right)_{k\in \N} \right)_{n\in \N}$ in $D(A^n)$ be such that
		\[
		x^{(m)} \to x:=\left(x_k \right)_{m\in \N}\in c_0,\ m \to \infty,
		\]
		and
		\[
		A^nx^{(m)} = \left[ \prod_{j=k}^{k+n-1} w^j \right]x_{k+n}^{(m)} \to y:=\left(y_k \right)_{k\in \N}\in c_0,\ m \to \infty.
		\]

		Then, for each $k \in \N$ (see, e.g., \cite{Markin2027EFA,Markin2026EOT,MarkSogh2021}),
		\[
		x_k^{(m)} \to x_k,\ m \to \infty,
		\]
		and 
		\[
		\left[ \prod_{j=k}^{k+n-1} w^j \right]x_{k+n}^{(m)} \to y_k, \ m \to \infty.
		\]
		Whence we infer that, for each $k\in \N$,
\[
\left[ \prod_{j=k}^{k+n-1} w^j \right]x_{k+n} = y_k,
\]
which means that
\[
\left( \left[ \prod_{j=k}^{k+n-1} w^j \right]x_{k+n} \right)_{k \in \N}=y\in c_0.
\]
		
		Therefore, $x \in D(A^n)$ and $y = A^nx$, which, by the \textit{Sequential Characterization of Closed Linear Operators} (see, e.g., \cite{Markin2027EFA,Markin2026EOT}), implies the operator $A^n$ is closed.
\end{proof}

\begin{thm}[Unbounded Linear Chaos in $c_0$]\label{ULCc0}\ \\
For an arbitrary $w\in \F$ with $|w|>1$, the unbounded linear weighted backward shift operator
\[
A_wx:=\left(w^kx_{k+1}\right)_{k\in \N}
\]
in the space $c_0$ with maximal domain
\[
D(A_w):=\left\{ x:=\left(x_k\right)_{k\in \N} \in c_0 \,\middle|\, \left(w^kx_{k+1}\right)_{k\in \N}\in c_0 \right\}
\]
is chaotic as well as its every power $A_w^n$, $n\in \N$.

Furthermore, each $\lambda \in \F$ is an eigenvalue for $A_w$ of geometric multiplicity $1$, i.e.,
\[
\dim\ker(A_w-\lambda I)=1.
\]
In particular, provided the underlying space is complex,
\[
\sigma_p\left(A_w\right)=\C.
\]
\end{thm}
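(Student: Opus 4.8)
The plan is to establish chaoticity by applying the \emph{Sufficient Condition for Linear Chaos} (Theorem \ref{SCC}), reusing the structural facts furnished by Lemma \ref{lem}: each power $A_w^n$ is a densely defined closed operator and $C^\infty(A_w)$ is dense in $c_0$. As in the bounded case, I would take $Y:=c_{00}$, which is dense in $c_0$ and contained in $C^\infty(A_w)$, and define the right inverse $B:Y\to Y$ by
\[
Bx:=\left(0,\,w^{-1}x_1,\,w^{-2}x_2,\,w^{-3}x_3,\dots\right),
\]
i.e.\ $(Bx)_1:=0$ and $(Bx)_k:=w^{-(k-1)}x_{k-1}$ for $k\ge 2$; this is a weighted forward shift carrying $c_{00}$ into $c_{00}$ and satisfying $A_wBx=x$ on $Y$, so hypothesis (1) of Theorem \ref{SCC} is immediate.

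The heart of the argument is the decay estimate (2). For $x\in c_{00}$ supported on $\{1,\dots,N\}$ the iterate $A_w^nx$ is the zero sequence once $n\ge N$, so $\|A_w^nx\|_\infty\le c\alpha^n$ holds for any $\alpha\in(0,1)$ upon choosing $c$ large enough to absorb the finitely many nonzero terms. For $B^n$ I would first record the explicit form
\[
(B^nx)_{k+n}=\left[\prod_{j=k}^{k+n-1}w^j\right]^{-1}x_k,\qquad (B^nx)_m=0\ (m\le n),
\]
whose weight has modulus $|w|^{-(nk+n(n-1)/2)}$. Maximizing over the support (the largest term occurs at $k=1$) gives
\[
\|B^nx\|_\infty\le |w|^{-n(n+1)/2}\,\|x\|_\infty\le |w|^{-n}\,\|x\|_\infty,
\]
the last inequality because $n(n+1)/2\ge n$. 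Thus $\max\left(\|A_w^nx\|_\infty,\|B^nx\|_\infty\right)\le c\alpha^n$ with $\alpha:=|w|^{-1}\in(0,1)$, and Theorem \ref{SCC} yields chaoticity of $A_w$, while Corollary \ref{CP} promotes this to every power $A_w^n$.

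For the spectral statement I would solve the eigenvalue equation $A_wx=\lambda x$ directly: the relation $w^kx_{k+1}=\lambda x_k$ is a first-order recurrence forcing
\[
x_k=\lambda^{\,k-1}w^{-(k-1)k/2}\,x_1,\qquad k\in\N,
\]
so every eigenvector is a scalar multiple of the one normalized by $x_1=1$, giving geometric multiplicity at most $1$. It then remains to confirm this candidate is a genuine eigenvector living in the space and the domain: since $|w|>1$, the super-exponentially decaying factor $|w|^{-(k-1)k/2}$ overwhelms the geometric factor $|\lambda|^{k-1}$, whence $x\in c_0$; moreover $(w^kx_{k+1})_k=\lambda x\in c_0$, so $x\in D(A_w)$. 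Hence each $\lambda\in\F$ is an eigenvalue with $\dim\ker(A_w-\lambda I)=1$, and in the complex case $\sigma_p(A_w)=\C$.

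The computations are routine once the structure from Lemma \ref{lem} is in hand; the points to get right are the explicit forms of $B^n$ and of the eigenvector, together with the observation that in both the $B^n$ estimate and the eigenvector membership it is the Gaussian-type exponent $n(n+1)/2$ (resp.\ $(k-1)k/2$) in the power of $|w|^{-1}$ that dominates and makes everything converge. I expect no serious obstacle beyond bookkeeping these super-exponential factors.
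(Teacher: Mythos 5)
Your proposal is correct and follows essentially the same route as the paper: the same dense set $Y=c_{00}$, the same weighted forward shift $B$ with $A_wBx=x$, the same estimate $\|B^nx\|_\infty\le |w|^{-n(n+1)/2}\|x\\|_\infty$ feeding into Theorem \ref{SCC} and Corollary \ref{CP}, and the same recurrence $x_k=\lambda^{k-1}w^{-k(k-1)/2}x_1$ for the spectral part. The only cosmetic differences are that the paper packages the $B^n$ estimate as quasinilpotency ($\|B^n\|^{1/n}\to 0$) while you simply take $\alpha=|w|^{-1}$, and your observation that $(w^kx_{k+1})_{k\in\N}=\lambda x\in c_0$ gives domain membership more directly than the paper's explicit verification.
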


\begin{proof}

Let $w\in \F$ with $|w|>1$ be arbitrary and, for simplicity of notation, let $A:=A_w$.

Consider the \textit{dense} subspace
\[
Y:=c_{00}
\]
of $c_0$ and the mapping $B:Y\to Y$, which is the restriction to $Y$ of the 
\textit{bounded linear operator} on $c_0$
\[
c_0\ni x:=(x_k)_{k\in \N}\mapsto Bx:=\left(w^{-(k-1)}x_{k-1}\right)_{k\in \N}\in c_0\quad (x_0:=0),
\]
(the right inverse of $A$) for which
\begin{equation*}
ABx=x,\ x\in c_0.
\end{equation*}

In particular, the latter holds for any $x\in Y$.

With
\[
c_0\ni x:=(x_k)_{k\in \N}\mapsto B^2x= \left( w^{-(k-1)} w^{-(k-2)} x_{k-2} \right)_{k \in \N}\quad (x_{k-2}:=0,\ k=1,2)
\]
and 
\[
c_0\ni x:=(x_k)_{k\in \N}\mapsto B^3x= \left( w^{-(k-1)} w^{-(k-2)} w^{-(k-3)} x_{k-3} \right)_{k \in \N}
\]
($x_{k-3}:=0,\ k=1,2,3$), we infer inductively that, for any $n\in \N$,
\[
c_0\ni x:=(x_k)_{k\in \N}\mapsto B^nx = \left(\left[ \prod_{j=1}^{n} w^{-(k-j)} \right] x_{k-n} \right)_{k \in \N}\quad (x_{k-n}:=0,\ k=1,\dots,n),
\]
or equivalently, in view of
\[
\prod_{j=1}^{n} w^{-(k-j)}=w^{-\sum_{j=1}^n (k-j)}=w^{-nk + \frac{n(n+1)}{2}},
\]
we have:
\[
c_0\ni (x_k)_{k\in \N}\mapsto B^nx = \left( w^{-nk + \frac{n(n+1)}{2}}x_{k-n} \right)_{k \in \N}\quad (x_k:=0,\ k=1,\dots,n).
\]

Let us show that
\begin{equation*}
\|B^n\| ={|w|}^{-\frac{n(n+1)}{2}},\ n\in \N.
\end{equation*}

Indeed, for any $n\in \N$ and $x:=\left(x_k\right)_{k\in \N}\in c_0$, in view of $|w|>1$,
\begin{align*}
\|B^nx\|_\infty&=\sup_{k\in \N}\left|w^{-nk + \frac{n(n+1)}{2}}x_{k-n}\right|
=\sup_{k\ge n+1}\left|w^{-nk + \frac{n(n+1)}{2}}x_{k-n}\right|
\\
&\le \sup_{k\ge n+1}{\left|w\right|}^{-nk + \frac{n(n+1)}{2}}
\sup_{k\ge n+1}\left|x_{k-n}\right|
={\left|w\right|}^{-n(n+1) + \frac{n(n+1)}{2}}\|x\|_\infty
\\
&={\left|w\right|}^{-\frac{n(n+1)}{2}}\|x\|_\infty,
\end{align*}
and hence,
\[
\|B^n\| \le {\left|w\right|}^{-\frac{n(n+1)}{2}}.
\]

Further, for any $n\in \N$, since, for $e_{1}:=\left(\delta_{1k}\right)_{k\in \N}$ with $\|e_{1}\|_\infty=1$,
\[
\|B^ne_1\|_\infty=\sup_{k\ge n+1}\left|w^{-nk + \frac{n(n+1)}{2}}\delta_{1(k-n)}\right|
={\left|w\right|}^{-n(n+1) + \frac{n(n+1)}{2}}={\left|w\right|}^{-\frac{n(n+1)}{2}},
\]
we infer that
\[
\|B^n\| ={\left|w\right|}^{-\frac{n(n+1)}{2}}.
\]

Thus,
\begin{equation}\label{UBS}
\lim_{n\to \infty}{\|B^n\|}^{1/n}=\lim_{n\to \infty}{\left|w\right|}^{-\frac{n+1}{2}}=0,
\end{equation}
i.e., the operator $B$ is \textit{quasinilpotent} (cf. \cite{arXiv:1811.06640}), which implies that
\begin{equation*}
\begin{aligned}
\forall\, x\in c_0:\ & \limsup_{n\to \infty}\|B^nx\|_\infty^{1/n}
\le \limsup_{n\to \infty}{\left(\|B^n\|\|x\|_\infty\right)}^{1/n} \\
&=\lim_{n\to \infty}{\|B^n\|}^{1/n} \lim_{n\to \infty}\|x\|_\infty^{1/n}=0<1.
\end{aligned}
\end{equation*}



In particular, the latter holds for any $x\in Y$.

Let $x:= (x_k)_{k\in \N} \in Y$ be arbitrary. Then
\[
\exists\, N\in \N\ \forall\,k\ge N:\ x_k=0,
\]
and hence,
\[
\forall\,n\ge N:\ A^nx = 0,
\]
which implies that
\[
\limsup_{n\to \infty}\|A^nx\|_\infty^{1/n}=\lim_{n\to \infty}\|A^nx\|_\infty^{1/n}=0.
\]


By Lemma \ref{lem}, the \textit{Sufficient Condition for Linear Chaos} (Theorem \ref{SCC}), and the \textit{Chaoticity of Powers Corollary} (Corollary \ref{CP}), we conclude that the operator $A$ is \textit{chaotic} as well as every power $A^n$, $n\in \N$. 

Here, we reproduce the proof of the spectral part of the statement furnished in \cite{arXiv:1811.06640}.

For arbitrary $\lambda \in \F$ ($\F:=\R$ or $\F:=\C$) and $x:=(x_k)_{\N} \in D(A)$, 
\begin{equation}\label{ev}
Ax=\lambda x,
\end{equation}
or equivalently,
\[
(w^k x_{k+1})_{k\in \N}=\lambda(x_k)_{k\in \N},
\]
implies that
\[
w^k x_{k+1}=\lambda x_k,\ k\in \N
\]

Whence, we recursively infer that
\[
x_k=\left[\prod_{j=1}^{k-1}\frac{\lambda}{w^{k-j}}\right]x_1
=\dfrac{\lambda^{k-1}}{w^{\sum_{j=1}^{k-1}(k-j)}}x_1
=\dfrac{\lambda^{k-1}}{w^{\frac{k(k-1)}{2}}}x_1
=\left(\dfrac{\lambda}{w^{\frac{k}{2}}}\right)^{k-1}x_1,\
k\in \N,
\]
with $0^0:=1$, as usual.

Considering that $|w|>1$, for all sufficiently large $k\in \N$, we have:
\[
\left|\dfrac{\lambda}{w^{\frac{k}{2}}}\right|^{k-1}
=\left(\dfrac{|\lambda|}{|w|^{\frac{k}{2}}}\right)^{k-1}
\le \left(\dfrac{1}{2}\right)^{k-1},
\]
which implies that
\[
y:=(y_k)_{k\in \N}:=\left(\left(\dfrac{\lambda}{w^{\frac{k}{2}}}\right)^{k-1}\right)_{k\in \N}\in c_0.
\]

Further, since
\[
w^ky_{k+1}=w^k\dfrac{\lambda^{k}}{w^{\sum_{j=1}^{k}(k+1-j)}}
=\dfrac{\lambda^{k}}{w^{\sum_{j=2}^{k}(k+1-j)}}=\dfrac{\lambda^{k}}{w^{\frac{(k-1)k}{2}}}=\left(\dfrac{\lambda}{w^{\frac{k-1}{2}}}\right)^{k},\ k\in \N,
\]
we similarly conclude that
\[
(w^ky_{k+1})_{k\in \N}\in c_0,
\]
and hence,
\[
y\in D(A)\setminus \{0\}.
\]

Thus, we have shown that, for any $\lambda \in \F$, all solutions of equation \eqref{ev} are of the form
\[
x:=(x_k)_{\N}=cy\in D(A),
\]
where $c\in \F$ is arbitrary. They form the \textit{one-dimensional} subspace of $c_0$ spanned by the sequence $y$, which completes the proof.
\end{proof}

\begin{rem}\label{NZ}
Theorem \ref{BLCc0}, Lemma \ref{lem}, and Theorem \ref{ULCc0} naturally extend from $c_0(\N)$ to $c_0(\Z_+)$ for the bounded weighted backward shifts:
\[
c_0(\Z_+)\ni x:=\left(x_k\right)_{k\in \Z_+}\mapsto A_wx:=w\left(x_{k+1}\right)_{k\in \Z_+}\in c_0(\Z_+) \quad (|w|>1)
\]
and the unbounded weighted backward shifts:
\[
A_wx:=\left(w^kx_{k+1}\right)_{k\in \Z_+}\quad (|w|>1)
\]
with maximal domain
\[
D(A_w):=\left\{ x:=\left(x_k\right)_{k\in \Z_+} \in c_0(\Z_+) \,\middle|\, \left(w^kx_{k+1}\right)_{k\in \Z_+}\in c_0(\Z_+) \right\}
\]
and the powers
\[
A_w^{n}x= \left( \left[ \prod_{j=k}^{k+n-1} w^j \right]x_{k+n} \right)_{k \in \Z_+},\ n\in \N,
\]
defined on
\[
D(A_w^{n})= \left\{ x := \left(x_k \right)_{k \in \Z_+} \in c_0(\Z_+) \,\middle|\, \left( \left[ \prod_{j=k}^{k+n-1} w^j \right]x_{k+n} \right)_{k \in \Z_+} \in c_0(\Z_+) \right\}
\]
(see the proof of Lemma \ref{lem}).

In the former case, the bounded right inverse of $A_w$ is
\[
c_0(\Z_+)\ni x:=(x_k)_{k\in \Z_+}\mapsto B_w x:=w^{-1}\left(x_{k-1}\right)_{k\in \Z_+}\in c_0(\Z_+)\quad (x_{-1}:=0),
\]
for which $\|B\|={|w|}^{-1}<1$, and, in the latter case, the bounded right inverse of $A_w$ is
\[
c_0(\Z_+)\ni x:=(x_k)_{k\in \Z_+}\mapsto B_wx:=\left(w^{-(k-1)}x_{k-1}\right)_{k\in \Z_+}\in c_0(\Z_+)\quad (x_{-1}:=0),
\]
with
\begin{align*}
c_0(\Z_+)\ni x:=(x_k)_{k\in \Z_+}\mapsto B_w^nx &= \left(\left[ \prod_{j=1}^{n} w^{-(k-j)} \right] x_{k-n} \right)_{k \in \Z_+}\\
&
=\left( w^{-nk + \frac{n(n+1)}{2}}x_{k-n} \right)_{k \in \Z_+},\ n\in \N,
\end{align*}
($x_{k-n}:=0$, $k=0,1,\dots,n-1$), for which
\[
\|B_w^n\|={\left|w\right|}^{-n^2 + \frac{n(n+1)}{2}}={\left|w\right|}^{-\frac{(n-1)n}{2}},
\]
and hence,
\begin{equation*}
\lim_{n\to \infty}{\|B_w^n\|}^{1/n}=\lim_{n\to \infty}{\left|w\right|}^{-\frac{n-1}{2}}=0,
\end{equation*}
i.e., $B_w$ is \textit{quasinilpotent} (cf. the proof of Theorem \ref{ULCc0}).
\end{rem}

\section{Weighted Backward Shifts in $c$}\label{WBSc}

The answer to the natural question of whether one can obtain linear chaos in the space $c(\N)$ of convergent sequences by merely extending the foregoing chaotic weighted backward shifts from the space $c_0(\N)$ is given in the negative by the subsequent statements.  

\begin{prop}[Bounded Weighted Backward Shifts on $c$]\ \\
For an arbitrary $w\in \F$ with $|w|>1$, the bounded linear weighted backward shift operator 
\begin{equation*}
c\ni x:=\left(x_k\right)_{k\in \N}\mapsto A_wx:=w\left(x_{k+1}\right)_{k\in \N}\in c
\end{equation*}
on the space $c$ is not hypercyclic.
\end{prop}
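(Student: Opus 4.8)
The plan is to exploit the \emph{limit functional} $l$ from \eqref{lf} as an obstruction to hypercyclicity. The decisive observation is that $l$ intertwines $A_w$ with multiplication by $w$ on the scalar field: for any $x:=(x_k)_{k\in\N}\in c$ one has $\lim_{k\to\infty} wx_{k+1}=w\lim_{k\to\infty}x_k$, so that
\[
l(A_w x)=w\,l(x),\qquad\text{and hence}\qquad l(A_w^n x)=w^n\,l(x),\ n\in\Z_+.
\]
Conceptually, this says that $c_0=\ker l$ is a closed $A_w$-invariant subspace and that on the one-dimensional quotient $c/c_0\cong\F$ the operator $A_w$ descends to multiplication by $w$; recognizing this finite-dimensional quotient dynamics as the source of the obstruction is the real content of the argument, after which the proof is routine.

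First I would recall that $l$ is a bounded linear functional with $\|l\|=1$ and, being nonzero, maps $c$ \emph{onto} $\F$. I would then argue by contradiction: suppose $x\in c\setminus\{0\}$ is a hypercyclic vector for $A_w$, so that $\orb(x,A_w)=\{A_w^n x\}_{n\in\Z_+}$ is dense in $c$. Since $l$ is continuous and surjective, density of the orbit forces its image
\[
l\bigl(\orb(x,A_w)\bigr)=\{w^n\,l(x)\mid n\in\Z_+\}
\]
to be dense in $\F$: indeed, given any $\mu\in\F$, pick $y\in c$ with $l(y)=\mu$, approximate $y$ by orbit points, and apply continuity of $l$.

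It then remains to check that the scalar set $\{w^n\,l(x)\mid n\in\Z_+\}$ is \emph{never} dense in $\F$. Writing $\mu:=l(x)$, if $\mu=0$ this set is $\{0\}$, so the entire orbit lies in the proper closed subspace $c_0$ and cannot be dense in $c$. If $\mu\neq0$, then $|w^n\mu|=|w|^n|\mu|\ge|\mu|>0$ for every $n\in\Z_+$ because $|w|>1$, so the set avoids the open ball of radius $|\mu|$ about $0$ in $\F$ and is again not dense. Either way we reach a contradiction, so no hypercyclic vector exists and $A_w$ is not hypercyclic.

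I expect the only genuine subtlety to be the density-transfer step --- verifying that a continuous surjection carries a dense set to a dense set --- together with the clean identification of the limit functional as the right invariant; the remaining scalar estimates are immediate from $|w|>1$.
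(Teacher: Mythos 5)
Your proof is correct, but it takes a genuinely different route from the paper. The paper computes $(A_w - wI)x = w\left(x_{k+1}-x_k\right)_{k\in\N}$, observes that this always lies in $c_0$, so $R(A_w - wI)\subseteq c_0$ is not dense in $c$, and then invokes Bourdon's Theorem (Theorem \ref{BT}, with $p(\lambda)=\lambda-w$), which says a bounded hypercyclic operator must have $\overline{R(p(A))}=X$ for every nonzero polynomial $p$. Your argument instead uses the identity $l(A_w x)=w\,l(x)$, pushes the orbit forward through the continuous surjection $l$ to the scalar orbit $\{w^n l(x)\}_{n\in\Z_+}$, and checks directly that such a scalar set is never dense in $\F$ (it is $\{0\}$ if $l(x)=0$, and avoids the ball of radius $|l(x)|$ about $0$ otherwise, since $|w|>1$). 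Note that the two proofs rest on literally the same structural fact: $l\circ A_w = w\,l$ is equivalent to $R(A_w-wI)\subseteq \ker l = c_0$; equivalently, $l$ is an eigenvector of the adjoint $A_w^*$. What differs is the deduction. The paper's route is a one-line appeal to a general necessary condition for hypercyclicity, consistent with its broader theme; yours is self-contained and more elementary --- it needs only continuity and surjectivity of $l$ plus the observation that hypercyclicity passes to quotients, and it exposes the dynamical reason for the failure: $A_w$ descends to multiplication by $w$ on the one-dimensional quotient $c/c_0$, which has no dense orbits. Your density-transfer step (a continuous surjection maps dense sets to dense sets) is correctly justified, so there is no gap.
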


\begin{proof}
Let $w\in \F$ with $|w|>1$ be arbitrary and, for simplicity of notation, let $A:=A_w$.

It is obvious that the operator $A$ is well defined on $c$ and also is linear and bounded with
\[
\|A\|=|w|.
\]

Since, for any $x:=\left(x_{k}\right)_{k \in \N}\in c$,
\begin{equation*}
(A - wI)x = w\left(x_{k+1}\right)_{k \in \N} - w\left(x_k\right)_{k \in \N}
= w\left(x_{k+1} - x_k\right)_{k \in \N}.
\end{equation*}
and
\[
\lim_{k\to \infty}w(x_{k+1} - x_k)=w\left(\lim_{k\to \infty}x_{k+1} - \lim_{k\to \infty}x_k\right)
=w\left(l(x) - l(x)\right)=0
\]
(see \eqref{lf}), we infer that
\[
R(A-wI)\subseteq c_0.
\]

Since $c_0$ is a closed proper subspace of $c$, it is nowhere dense in $c$
(see, e.g., \cite{Markin2027EFA,Markin2026EOT}) and, as follows from the prior inclusion,
so is $R(A-wI)$.

Hence,
\[
\overline{R(A_w-wI)}\neq c,
\]
which, by \textit{Bourdon's Theorem} (Theorem \ref{BT}) 
implies that the operator $A$ is not hypercyclic (see Remark \ref{remB}).
\end{proof}

\begin{prop}[Unbounded Weighted Backward Shifts in $c$]\ \\
For an arbitrary $w\in \F$ with $|w|>1$, the unbounded linear weighted backward shift operator
\[
A_wx:=\left(w^kx_{k+1}\right)_{k\in \N}
\]
in the space $c$ with maximal domain
\[
D(A_w):=\left\{ x:=\left(x_k\right)_{k\in \N} \in c \,\middle|\, \left(w^kx_{k+1}\right)_{k\in \N}\in c\right\}
\]
is not hypercyclic.
\end{prop}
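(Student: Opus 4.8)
The plan is to exploit the peculiar structure of the maximal domain $D(A_w)$, which—in sharp contrast to the bounded case—turns out to be contained entirely inside the nowhere dense subspace $c_0$. Writing $A := A_w$ for brevity, I would first establish the inclusion
\[
D(A) \subseteq c_0.
\]
Indeed, for an arbitrary $x := (x_k)_{k\in\N} \in D(A)$, we have simultaneously $x \in c$, so that $\lim_{k\to\infty} x_k = l(x)$ exists, and $(w^k x_{k+1})_{k\in\N} \in c$, so that $\lim_{k\to\infty} w^k x_{k+1}$ exists and is finite. Were $l(x) \ne 0$, then $|x_{k+1}| \to |l(x)| > 0$ while $|w|^k \to \infty$ (since $|w| > 1$), forcing $|w^k x_{k+1}| \to \infty$ and contradicting the convergence of $(w^k x_{k+1})_{k\in\N}$. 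Hence $l(x) = 0$, i.e.\ $x \in c_0$.

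With this inclusion in hand the conclusion is immediate. Since
\[
C^\infty(A) := \bigcap_{n=1}^\infty D(A^n) \subseteq D(A) \subseteq c_0
\]
and $c_0$ is a closed proper hyperplane of $c$, and hence a nowhere dense subspace of $c$ (see the Preliminaries), the subspace $C^\infty(A)$ cannot be dense in $c$. On the other hand, were $A$ hypercyclic, the set $HC(A)$ of its hypercyclic vectors would be dense in $c$, and the more so the subspace $C^\infty(A) \supseteq HC(A)$ (see Remarks \ref{HCrems}). This contradiction shows that the operator $A$ is not hypercyclic.

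The only genuinely new ingredient—and the step I expect to demand the most care—is the domain inclusion $D(A) \subseteq c_0$; once it is in place, the argument closes purely on the nowhere-density of the hyperplane $c_0$ in $c$ together with the standard density of the set of hypercyclic vectors. It is worth noting that this reasoning is both sharper than and independent of the Bourdon-type range argument used for the bounded shift: Bourdon's Theorem (Theorem \ref{BT}) is stated for \emph{bounded} operators only, whereas here the obstruction is of a different nature altogether—the unbounded extension simply has too meager a domain, confined to a nowhere dense subspace of $c$, leaving no room for a dense orbit.
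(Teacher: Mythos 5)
Your proposal is correct and follows essentially the same route as the paper: both establish the inclusion $D(A_w)\subseteq c_0$, then conclude that $C^\infty(A_w)$ lies in the nowhere dense hyperplane $c_0$ of $c$, contradicting the density of $C^\infty(A_w)$ required for hypercyclicity (Remarks \ref{HCrems}). The only cosmetic difference is that the paper derives $x_{k+1}=w^{-k}y_k\to 0$ directly from the boundedness of the convergent sequence $y:=\left(w^kx_{k+1}\right)_{k\in\N}$, whereas you argue by contradiction from $l(x)\neq 0$; both are valid.
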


\begin{proof}
Let $w\in \F$ with $|w|>1$ be arbitrary and, for simplicity of notation, let $A:=A_w$.

As follows from the definition, for any $x:=\left(x_k\right)_{k\in \N}\in D(A)$,
\[
y:=\left(y_k:=w^kx_{k+1}\right)_{k\in \N}\in c
\]
and hence, in view of $|w|>1$,
\[
x_{k+1}=w^{-k}y_k\to 0,\ k\to \infty.
\]

Therefore,
\[
D(A)\subseteq c_0.
\]

Since $c_0$ is a closed proper subspace of $c$, it is nowhere dense in $c$
(see, e.g., \cite{Markin2027EFA,Markin2026EOT}) and, by the preceding inclusion, so is $D(A)$.

Hence,
\[
\overline{D(A)}\neq c,
\]
which immediately implies that the operator $A$ is not hypercyclic (see Remarks \ref{HCrems}).
\end{proof}

\section{Linear Chaos in $c$}\label{LCc}

With the hypercyclicity by extension compromised, here, we construct bounded and unbounded chaotic linear operators in $c(\N)$ based on the chaotic backward shifts in $c_0(\Z_+)$ via establishing a homeomorphic isomorphism between the two spaces (i.e., an isomorphism which is also a homeomorphism).

\begin{lem}[Homeomorphic Isomorphism]\label{lem1}\ \\ \noindent
The mapping
\[
c(\N) \ni x:= (x_k)_{k \in \N} \mapsto Jx:= (y_k)_{k \in \Z_+} \in c_0(\Z_+),
\]
assigning to each $x:= (x_k)_{k \in \N}\in c(\N)$  the sequence $\left(y_k\right)_{k\in \Z_+}\in c_0(\Z_+)$ of the coordinates of $x$ relative to the standard Schauder basis 
$\left\{e_n\right\}_{n\in \Z_+}$ for $c(\N)$, where 
\[
e_0 := (1,1,1,\dots)\quad \text{and}\quad e_n:=\left(\delta_{nk}\right)_{k\in \N},\ n\in\N,
\]
i.e.,
\[
y_0:=l(x)\quad \text{and}\quad y_k:=x_k-l(x),\ k\in \N,
\]
where $l$ is the limit functional, is a homeomorphic isomorphism of $c(\N)$ onto $c_0(\Z_+)$.
\end{lem}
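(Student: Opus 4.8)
The plan is to verify that $J$ is a well-defined linear bijection between $c(\N)$ and $c_0(\Z_+)$ and that both $J$ and $J^{-1}$ are bounded, whence by the Bounded Inverse Theorem (or by exhibiting explicit norm bounds) $J$ is a homeomorphic isomorphism. The map $J$ sends $x$ to the sequence $(y_k)_{k\in\Z_+}$ of its coordinates relative to the Schauder basis $\{e_n\}_{n\in\Z_+}$ for $c(\N)$, so most of the structural work is already encoded in the basis expansion recalled in the Preliminaries.

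First I would confirm that $J$ lands in $c_0(\Z_+)$: for $x=(x_k)_{k\in\N}\in c(\N)$ we have $y_0=l(x)$ and $y_k=x_k-l(x)$ for $k\in\N$, and since $x_k\to l(x)$ we get $y_k=x_k-l(x)\to 0$ as $k\to\infty$, so indeed $(y_k)_{k\in\Z_+}\in c_0(\Z_+)$. Linearity of $J$ is immediate from the linearity of the limit functional $l$ and of the coordinate assignments $x\mapsto x_k$. Next I would write down the inverse explicitly: given $(y_k)_{k\in\Z_+}\in c_0(\Z_+)$, set $x_k:=y_k+y_0$ for $k\in\N$; then $x_k\to y_0\in\F$, so $x:=(x_k)_{k\in\N}\in c(\N)$ with $l(x)=y_0$, and one checks $J(J^{-1}y)=y$ and $J^{-1}(Jx)=x$, establishing bijectivity.

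For the topological half I would estimate the two operator norms directly in the $\infty$-norm. In one direction, from $|y_0|=|l(x)|\le\|x\|_\infty$ and $|y_k|=|x_k-l(x)|\le\|x\|_\infty+|l(x)|\le 2\|x\|_\infty$, we obtain $\|Jx\|_\infty\le 2\|x\|_\infty$, so $J$ is bounded. In the reverse direction, from $|x_k|=|y_k+y_0|\le |y_k|+|y_0|\le 2\|y\|_\infty$ we obtain $\|J^{-1}y\|_\infty\le 2\|y\|_\infty$, so $J^{-1}$ is bounded as well. Together, $J$ is a bounded linear bijection with bounded inverse, hence a homeomorphic isomorphism from $c(\N)$ onto $c_0(\Z_+)$.

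The only point requiring genuine care—rather than the routine bijectivity checks—is the verification that $J^{-1}$ is bounded, i.e.\ that recovering $x$ from its coordinates does not blow up the norm. Here the subtlety is that the coordinate $y_0=l(x)$ couples into \emph{every} entry $x_k=y_k+y_0$, so one must confirm the uniform bound $|x_k|\le 2\|y\|_\infty$ holds across all $k$ simultaneously rather than entry-by-entry with growing constants; the factor of $2$ is harmless, but the argument must be the uniform supremum estimate above rather than a pointwise one. This is exactly what guarantees that the inverse map is continuous, closing the proof.
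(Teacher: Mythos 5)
Your proof is correct, and it diverges from the paper's in one substantive way: how the continuity of $J^{-1}$ is obtained. The paper, after establishing linearity, injectivity (via uniqueness of the Schauder expansion), surjectivity, and the bound $\|Jx\|_\infty \le 2\|x\|_\infty$, invokes the \emph{Inverse Mapping Theorem} to conclude that $J^{-1}$ is automatically continuous; you instead estimate $J^{-1}$ directly, noting that $\bigl|y_k + y_0\bigr| \le |y_k| + |y_0| \le 2\|y\|_\infty$ uniformly in $k$, so $\|J^{-1}y\|_\infty \le 2\|y\|_\infty$. Your route is more elementary and self-contained: it avoids the Baire-category machinery behind the Inverse Mapping Theorem and yields an explicit operator-norm bound for $J^{-1}$, which the paper's argument does not provide. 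The paper's route buys brevity and robustness (it would survive even if the explicit formula for $J^{-1}$ were less transparent), but in this concrete setting the inverse is written down explicitly anyway, so nothing is lost by your direct estimate. A minor secondary difference: you certify bijectivity by checking that the explicit map $y \mapsto (y_k+y_0)_{k\in\N}$ is a two-sided inverse, whereas the paper argues injectivity from the uniqueness of basis coordinates and surjectivity separately; these are equivalent in content. One small caution: your final paragraph frames the uniform bound on $J^{-1}$ as the delicate point, but since $|y_0|$ and $|y_k|$ are both dominated by the single quantity $\|y\|_\infty$, the uniformity is immediate; the remark is harmless but overstates the subtlety.
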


\begin{proof}
In view of the uniqueness of the Schauder expansion, we infer that the mapping $J$ is \textit{linear} and further, since, for an $x\in c(\N)$,
\[
Jx = 0 \iff y_k = 0,\ k \in \Z_+ \iff x = \sum^\infty_{k=0}y_k e_k = 0 \in c(\N),
\]
$J$ is also \textit{injective} (see, e.g., \cite{Markin2027EFA,Markin2026EOT,MarkSogh2021}).

Further, for any $y:=(y_k)_{k \in \Z_+} \in c_0(\Z_+)$, let
\[
x:=\left( y_k+y_0\right)_{k\in \N}.
\]

Since
\[
\lim_{k\to \infty}y_k=0,
\]
we infer that
\[
\lim_{k\to \infty}x_k=\lim_{k\to \infty}(y_k+y_0)=y_0,
\]


Thus,
\[
x\in c(\N)\quad \text{and}\quad Jx=y,
\]
which implies that the mapping $J$ is also \textit{surjective}, and thus, is  \textit{bijective}.

Hence $J:c(\N)\to c_0(\Z_+)$ is an \textit{isomorphism} between the spaces $c(\N)$ and $c_0(\Z_+)$.


Since, for an arbitrary $x:=\left(x_k\right)_{k\in \N}\in c(\N)$,
\[
|l(x)|=\left|\lim_{k\to \infty}x_k\right|=\lim_{k\to \infty}\left|x_k\right|\le \sup_{k\in \N}|x_k|=:\|x\|_\infty,
\]
we also have:
\begin{equation*}
\| Jx \|_\infty := \sup_{k \in \Z+} \left|y_k\right|
=\max\left[|l(x)|,\sup_{k \in \N} \left|x_k - l(x)\right|\right]
\leq 2 \| x \|_\infty.
\end{equation*}

Thus, the linear mapping $J$ is \textit{bounded} with $\| J \|\le 2$ (it can be easily shown that, in fact, $\| J \|=2$), and hence \textit{continuous}, which, by the \textit{Inverse Mapping Theorem} (see, e.g., \cite{Markin2027EFA,Markin2026EOT}), implies that so is its inverse $J^{-1}:c_0(\Z_+)\to c(\N)$:
\begin{equation}\label{JI}
c_0(\Z_+) \ni x:= (y_k)_{k \in \Z_+} \mapsto J^{-1}x:= (y_k+y_0)_{k \in \N} \in c(\N).
\end{equation}

Furthermore, for an arbitrary $y:=(y_k)_{k \in \Z_+} \in c_0(\Z_+)$,
\begin{equation*}
\left\| J^{-1}y \right\|_\infty := \sup_{k \in \N} \left|y_k+y_0\right|
\le \| y \|_\infty +|y_0| \le  2 \| y\|_\infty,
\end{equation*}
and hence,
\[
\left\| J^{-1}\right\|\le 2
\]
(it can be easily shown as well that actually $\left\| J^{-1}\right\|=2$).

Whence, we conclude that the mapping $J:c(\N)\to c_0(\Z_+)$ is both \textit{isomorphic} and \textit{homeomorphic}.
\end{proof}

\begin{thm}[Bounded Linear Chaos on $c$]\label{BLC}\ \\
For an arbitrary $w\in \F$ with $|w|>1$, the bounded linear operator 
\[
c\ni x:=\left(x_k\right)_{k\in \N}\mapsto \hat{A}_wx:=w\left(x_{k+1}+x_1-2l(x)\right)_{k\in \N}\in c
\]
on the space $c$ is chaotic as well as its every power 
\[
c\ni x:=\left(x_k\right)_{k\in \N}\mapsto \hat{A}_w^nx = w^n \left( x_{k+n} + x_n -2l(x) \right)_{k \in \N},\ n \in \N,  
\]
and, provided the underlying space is complex
\[
\sigma\left(\hat{A}_w\right)=\left\{ \lambda\in \C \,\middle|\, |\lambda|\le |w| \right\}
\]
with
\[
\sigma_p\left(\hat{A}_w\right)=\left\{ \lambda\in \C \,\middle|\, |\lambda|<|w| \right\}\quad \text{and}\quad
\sigma_c\left(\hat{A}_w\right)=\left\{ \lambda\in \C \,\middle|\, |\lambda|=|w| \right\}.
\]
\end{thm}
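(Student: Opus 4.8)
The plan is to recognize the operator $\hat{A}_w$ as the conjugate $J^{-1}A_wJ$ of the bounded chaotic weighted backward shift $A_w$ on $c_0(\Z_+)$, where $J$ is the homeomorphic isomorphism of Lemma \ref{lem1}, and then to transfer chaoticity, the formula for the powers, and the entire spectral decomposition across this conjugacy. The whole argument thereby reduces the theorem to the already-established $\Z_+$-analogue of Theorem \ref{BLCc0} recorded in Remark \ref{NZ}.

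First I would verify the conjugation identity by direct substitution. For $x:=(x_k)_{k\in\N}\in c$, the sequence $Jx=(y_k)_{k\in\Z_+}$ has $y_0=l(x)$ and $y_k=x_k-l(x)$ for $k\in\N$; applying the $\Z_+$-shift $A_wy=w(y_{k+1})_{k\in\Z_+}$ and then $J^{-1}$, whose action is $(z_k)_{k\in\Z_+}\mapsto(z_k+z_0)_{k\in\N}$, returns exactly $w(x_{k+1}+x_1-2l(x))_{k\in\N}=\hat{A}_wx$. Thus $\hat{A}_w=J^{-1}A_wJ$ and, in particular, $\hat{A}_w$ is bounded as a composition of bounded maps. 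Moreover, the power formula follows at once from $\hat{A}_w^{\,n}=J^{-1}A_w^{\,n}J$ together with $A_w^{\,n}y=w^{n}(y_{k+n})_{k\in\Z_+}$: the same two-step computation yields $\hat{A}_w^{\,n}x=w^{n}(x_{k+n}+x_n-2l(x))_{k\in\N}$, matching the displayed expression.

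Second, I would invoke the conjugacy-invariance of linear chaos. The identity $\hat{A}_w=J^{-1}A_wJ$ means $J$ intertwines the operators, $J\hat{A}_w=A_wJ$, so $J^{-1}$ maps each $A_w$-orbit onto an $\hat{A}_w$-orbit and each subspace $\Per_N(A_w)$ onto $\Per_N(\hat{A}_w)$. Since $J^{-1}$ is a homeomorphism it preserves density; hence a hypercyclic vector $u$ for $A_w$ yields the hypercyclic vector $J^{-1}u$ for $\hat{A}_w$, and a dense set of periodic points for $A_w$ yields one for $\hat{A}_w$, so $\hat{A}_w$ is chaotic. Applying the same reasoning to $\hat{A}_w^{\,n}=J^{-1}A_w^{\,n}J$, together with the chaoticity of every power $A_w^{\,n}$ (Theorem \ref{BLCc0}, extended in Remark \ref{NZ}), shows each power $\hat{A}_w^{\,n}$ is chaotic.

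Finally, for the spectral statement I would use $\hat{A}_w-\lambda I=J^{-1}(A_w-\lambda I)J$. This identity gives $\ker(\hat{A}_w-\lambda I)=J^{-1}\ker(A_w-\lambda I)$ and $R(\hat{A}_w-\lambda I)=J^{-1}R(A_w-\lambda I)$, so injectivity, surjectivity, and, because $J^{-1}$ is a homeomorphism and therefore commutes with closure, density of the range are all preserved; consequently $\sigma(\hat{A}_w)=\sigma(A_w)$ with $\sigma_p$, $\sigma_c$, and $\sigma_r$ matched componentwise, and the claimed disks follow from the $\Z_+$-analogue of Theorem \ref{BLCc0}. I expect the only genuinely delicate point to be this last transfer of the continuous spectrum: it is precisely the homeomorphy of $J$, not merely its being a linear isomorphism, that guarantees $\overline{R(\hat{A}_w-\lambda I)}=c$ exactly when $\overline{R(A_w-\lambda I)}=c_0(\Z_+)$, so I would take care to cite the homeomorphic, and not just isomorphic, conclusion of Lemma \ref{lem1} at that step.
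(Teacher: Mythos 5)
Your proposal is correct and follows essentially the same route as the paper's own proof: identifying $\hat{A}_w$ as the conjugate $J^{-1}A_wJ$ of the bounded weighted backward shift on $c_0(\Z_+)$ via the homeomorphic isomorphism $J$ of Lemma \ref{lem1}, computing the powers $\hat{A}_w^{\,n}=J^{-1}A_w^{\,n}J$ by direct substitution, and transferring chaoticity and the spectral decomposition from the $\Z_+$-analogue of Theorem \ref{BLCc0} (Remark \ref{NZ}). The only difference is one of exposition: you spell out the orbit, periodic-point, kernel, and range transfers that the paper compresses into the single assertion that $\hat{A}_w^{\,n}$ ``inherits'' these properties from its conjugate, which is a welcome elaboration rather than a deviation.
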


\begin{proof}
Let $w\in \F$ with $|w|>1$ be arbitrary and, for simplicity of notation, let 
$\hat{A}:=\hat{A}_w$ and $A:=A_w$.

On $c(\N)$, consider the linear operator $\hat{A}$ defined as follows:
\begin{equation}\label{sim}
\hat{A}:=J^{-1}AJ, 
\end{equation}
where
\[
c_0(\Z_+)\ni y:=\left(y_k\right)_{n\in \Z_+} \mapsto Ay:=w\left(y_{k+1}\right)_{k\in \Z_+}\in c_0(\Z_+)
\]
is the bounded weighted backward shift on $c_0(\Z_+)$, shown to be \textit{chaotic} 
along with its every power $A^n$, $n\in \N$, in Theorem \ref{BLCc0} (see Remark \ref{NZ}), and $J:c(\N)\to c_0(\Z_+)$ is the \textit{homeomorphic isomorphism} of $c(\N)$ onto $c_0(\Z_+)$ of Lemma \ref{lem1}, i.e., via the commutative diagram
\begin{equation*}
\begin{tikzcd}[sep=large]
c_0(\Z_+) \arrow[r, "A"] & c_0(\Z_+)  \\
c(\N)\arrow[u, "J"] \arrow[r, "\hat{A}"]& c(\N)\arrow[u, "J"].
\end{tikzcd}
\end{equation*}

Since, by \eqref{sim},
\[
{\hat{A}}^n:=J^{-1}A^nJ,\ n\ \in \N,
\]
where
\[
c_0(\Z_+)\ni y:=\left(y_k\right)_{n\in \Z_+} \mapsto A^ny:=w^n\left(y_{k+n}\right)_{k\in \Z_+}\in c_0(\Z_+)
\]
(see Remark \ref{NZ}), for any $x:=\left(x_k\right)_{k\in \N} \in c(\N)$,
\[
A^nJx=w^n\left(x_{k+n}-l(x)\right)_{k\in \Z_+}=:\left(y_k\right)_{k\in \Z_+},
\]
and hence, in view of \eqref{JI}, we have:
\[
{\hat{A}}^nx:=J^{-1}A^nJ=\left(y_k+y_0\right)_{k\in \N}
=w^n\left(x_{k+n}+x_n-2l(x)\right)_{k\in \N}\in c(\N).
\]

Observe that
\[
\lim_{k\to \infty}w^n\left(x_{k+n}+x_n-2l(x)\right)=w^n\left(l(x)+x_n-2l(x)\right)
=w^n\left(x_n-l(x)\right),\ n\in \N.
\]

Since, by Lemma \ref{lem1}, $J:c(\N)\to c_0(\Z_+)$ is a \textit{homeomorphic isomorphism}, the operator ${\hat{A}}^n$ ($n\in \N$) inherits \textit{linearity}, \textit{boundedness}, \textit{chaoticity}, and \textit{spectral structure} directly from its conjugate $A^n$ via $J$ (for a similar construct in the context of hypercyclicity, see, e.g., \cite[Theorem $2.4$]{B-Ch-S2001}). 

Therefore, the statement follows immediately from Theorem \ref{BLCc0}.
\end{proof}

\begin{thm}[Unbounded Linear Chaos in $c$]\label{ULC}\ \\
For an arbitrary $w\in \F$ with $|w|>1$, the linear operator 
\[
\hat{A}_wx:=\left(w^k(x_{k+1}-l(x))+x_1-l(x)\right)_{k\in \N}
\]
in the space $c$ with domain
\[
D(\hat{A}_w):=\left\{ x:=\left(x_k\right)_{k\in \N} \in c \,\middle|\, \left(w^k\left(x_{k+1}-l(x)\right)\right)_{k\in \N}\in c_0 \right\}
\]
is unbounded, closed, and chaotic as well as its every power 
\[
\hat{A}_w^nx=\left( \left[ \prod_{j=k}^{k+n-1} w^j \right]\left(x_{k+n}-l(x)\right)
+\left[ \prod_{j=0}^{n-1} w^j \right]\left(x_{n}-l(x)\right) \right)_{k \in \N},\ n\in \N,
\]
with domain
\[
D(\hat{A}_w^n)=\left\{ x:=\left(x_k\right)_{k\in \N} \in c \,\middle|\, 
\left( \left[ \prod_{j=k}^{k+n-1} w^j \right]\left(x_{k+n}-l(x)\right) \right)_{k \in \N}\in c_0 \right\}
\]

Furthermore, each $\lambda \in \F$ is an eigenvalue for $\hat{A}_w$ of geometric multiplicity $1$, i.e.,
\[
\dim\ker(\hat{A}_w-\lambda I)=1.
\]
\end{thm}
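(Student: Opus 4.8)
The plan is to reproduce the strategy of Theorem \ref{BLC} verbatim, replacing the bounded shift by its unbounded counterpart. Concretely, I would let $A := A_w$ be the \emph{unbounded} weighted backward shift on $c_0(\Z_+)$, with the maximal domain and the powers $A^n$ recorded in Remark \ref{NZ}, and define the operator on $c(\N)$ by conjugation through the homeomorphic isomorphism $J : c(\N) \to c_0(\Z_+)$ of Lemma \ref{lem1}:
\[
\hat{A} := J^{-1} A J, \qquad D(\hat{A}) := J^{-1}\bigl(D(A)\bigr).
\]
Since then $\hat{A}^n = J^{-1} A^n J$, the first task is purely computational: substitute $y := Jx$, i.e.\ $y_0 = l(x)$ and $y_k = x_k - l(x)$ for $k \in \N$, into $A^n y = \bigl(\prod_{j=k}^{k+n-1} w^j\, y_{k+n}\bigr)_{k \in \Z_+}$, and then apply $J^{-1}$ as in \eqref{JI}, namely $(J^{-1} z)_k = z_k + z_0$. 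The crucial bookkeeping is that the $k=0$ entry of $A^n y$, which equals $\prod_{j=0}^{n-1} w^j\,(x_n - l(x))$, is precisely the additive constant $z_0$ that $J^{-1}$ appends to every coordinate; this yields exactly the stated formula for $\hat{A}^n x$, and — since the single constant entry $z_0$ never affects membership in $c_0(\Z_+)$ — exactly the stated domain $D(\hat{A}^n)$.

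The substantive part is transferring the asserted properties across the conjugation, and here the point requiring genuine care — unlike the bounded Theorem \ref{BLC} — is that $A$ is now \emph{unbounded and merely closed}. I would argue each property by a short conjugation argument. For dense definedness and unboundedness: $D(\hat{A}) = J^{-1}(D(A))$ is dense because $J^{-1}$ is a homeomorphism and $D(A)$ is dense by Lemma \ref{lem} (extended via Remark \ref{NZ}), while $\hat{A}$ is unbounded since otherwise $A = J\hat{A}J^{-1}$ would be bounded. For closedness: if $x_m \to x$ and $\hat{A}x_m \to z$ with $x_m \in D(\hat{A})$, then continuity of $J$ gives $Jx_m \to Jx$ and $A(Jx_m) = J(\hat{A}x_m) \to Jz$, so the closedness of $A^n$ from Lemma \ref{lem} forces $Jx \in D(A)$ with $A(Jx) = Jz$, whence $x \in D(\hat{A})$ and $\hat{A}x = z$. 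For chaoticity of $\hat{A}$ and of its powers: since $\orb(x, \hat{A}) = J^{-1}\bigl(\orb(Jx, A)\bigr)$ and $\Per(\hat{A}^n) = J^{-1}\bigl(\Per(A^n)\bigr)$, the homeomorphism $J^{-1}$ carries dense orbits to dense orbits and dense sets of periodic points to dense sets, so the chaoticity of every $A^n$ (Theorem \ref{ULCc0} via Remark \ref{NZ}) passes to every $\hat{A}^n$.

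Finally, for the eigenvalue assertion I would use the identity $\hat{A} - \lambda I = J^{-1}(A - \lambda I)J$, valid because $J^{-1}(\lambda I)J = \lambda I$, which gives $\ker(\hat{A} - \lambda I) = J^{-1}\bigl(\ker(A - \lambda I)\bigr)$. As $J^{-1}$ is a linear isomorphism it preserves dimension, so $\dim\ker(\hat{A} - \lambda I) = \dim\ker(A - \lambda I) = 1$ for every $\lambda \in \F$ by the unbounded $c_0$-case of Theorem \ref{ULCc0} (again through Remark \ref{NZ}); over $\C$ this gives $\sigma_p(\hat{A}) = \C$. I expect the only real obstacle to be the closedness transfer for the \emph{unbounded} conjugate, as this is the one step that would fail for a merely continuous, non-homeomorphic intertwiner, so I would be careful to invoke the continuity of both $J$ and $J^{-1}$ there; everything else reduces to the conjugation argument already used for the bounded Theorem \ref{BLC} together with the indexing bookkeeping between $\N$ and $\Z_+$.
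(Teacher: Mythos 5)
Your proposal is correct and takes essentially the same approach as the paper: define $\hat{A}_w := J^{-1}A_wJ$ with $D(\hat{A}_w) = J^{-1}(D(A_w))$, carry out the same coordinate bookkeeping (the $k=0$ entry becoming the additive constant under $J^{-1}$) to get the stated formulas for $\hat{A}_w^n$ and its domain, and transfer unboundedness, closedness, chaoticity, and the eigenvalue structure from Theorem \ref{ULCc0} (via Remark \ref{NZ}) through the homeomorphic isomorphism of Lemma \ref{lem1}. The only difference is that you spell out the individual transfer arguments (closedness via sequences, orbits, periodic points, kernels) which the paper compresses into the single assertion that $\hat{A}_w^n$ ``inherits'' these properties from its conjugate $A_w^n$ --- a difference of detail, not of method.
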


\begin{proof}
Let $w\in \F$ with $|w|>1$ be arbitrary and, for simplicity of notation, let 
$\hat{A}:=\hat{A}_w$ and $A:=A_w$.

In $c(\N)$, consider the linear operator $\hat{A}$ defined as follows:
\begin{equation}\label{sim1}
\hat{A}:=J^{-1}AJ, 
\end{equation}
where
\[
Ax:=\left(w^ky_{k+1}\right)_{k\in \Z_+}\quad (|w|>1)
\]
with maximal domain
\[
D(A):=\left\{ x:=\left(y_k\right)_{k\in \Z_+} \in c_0(\Z_+) \,\middle|\, \left(w^ky_{k+1}\right)_{k\in \Z_+}\in c_0(\Z_+) \right\}
\]
is the unbounded weighted backward shift  in $c_0(\Z_+)$, shown to be \textit{chaotic} 
along with its every power $A^n$, $n\in \N$, in Theorem \ref{ULCc0} (see Remark \ref{NZ}), and $J:c(\N)\to c_0(\Z_+)$ is the \textit{homeomorphic isomorphism} of $c(\N)$ onto $c_0(\Z_+)$ of Lemma \ref{lem1}, i.e., via the commutative diagram
\begin{equation*}
\begin{tikzcd}[sep=large]
D(A) \arrow[r, "A"] & c_0(\Z_+)  \\
D(\hat{A})\arrow[u, "J"] \arrow[r, "\hat{A}"]& c(\N)\arrow[u, "J"],
\end{tikzcd}
\end{equation*}
for which the domain is
\[
D(\hat{A}):=J^{-1}\left(D(A)\right).
\]

Since, by \eqref{sim1},
\[
{\hat{A}}^n:=J^{-1}A^nJ,\ n\ \in \N,
\]
where
\[
A^{n}y= \left( \left[ \prod_{j=k}^{k+n-1} w^j \right]y_{k+n} \right)_{k \in \Z_+}
\]
with domain
\[
D(A^{n})= \left\{ y := \left(y_k \right)_{k \in \Z_+} \in c_0(\Z_+) \,\middle|\, 
\left( \left[ \prod_{j=k}^{k+n-1} w^j \right]y_{k+n} \right)_{k \in \Z_+} \in c_0(\Z_+) \right\}
\]
(see Remark \ref{NZ}), we have:
\[
D({\hat{A}}^n)=\left\{ x:=\left(x_k\right)_{k\in \N} \in c(\N) \,\middle|\, 
\left( \left[ \prod_{j=k}^{k+n-1} w^j \right]\left(x_{k+n}-l(x)\right) \right)_{k \in \N}\in c_0(\N) \right\}
\]
and, considering that, for any $x:=\left(x_k\right)_{k\in \N} \in D({\hat{A}}^n)$,
\[
A^nJx=\left( \left[ \prod_{j=k}^{k+n-1} w^j \right]\left(x_{k+n}-l(x)\right) \right)_{k \in \Z_+}=:\left(y_k\right)_{k\in \Z_+},
\]
in view of \eqref{JI},
\begin{align*}
{\hat{A}}^nx&=J^{-1}A^nJ=\left(y_k+y_0\right)_{k\in \N}
\\
&=\left( \left[ \prod_{j=k}^{k+n-1} w^j \right]\left(x_{k+n}-l(x)\right)
+\left[ \prod_{j=0}^{n-1} w^j \right]\left(x_{n}-l(x)\right) \right)_{k \in \N}\in c(\N).
\end{align*}

Observe that
\begin{align*}
&\lim_{k\to \infty}\left( \left[ \prod_{j=k}^{k+n-1} w^j \right]\left(x_{k+n}-l(x)\right)
+\left[ \prod_{j=0}^{n-1} w^j \right]\left(x_{n}-l(x)\right) \right)\\
&=\left[ \prod_{j=0}^{n-1} w^j \right]\left(x_{n}-l(x)\right)
\end{align*}
since
\[
\left( \left[ \prod_{j=k}^{k+n-1} w^j \right]\left(x_{k+n}-l(x)\right) \right)_{k \in \N}\in c_0(\N)
\]
and 
\[
\left[ \prod_{j=0}^{n-1} w^j \right]\left(x_{n}-l(x)\right)\in \F
\]
is a constant independent of $k\in \N$.

In particular, for $n=1$, we have
\[
\hat{A}x:=\left(w^k(x_{k+1}-l(x))+x_1-l(x)\right)_{k\in \N}
\]
with domain
\[
D(\hat{A}):=\left\{ x:=\left(x_k\right)_{k\in \N} \in c \,\middle|\, \left(w^k\left(x_{k+1}-l(x)\right)\right)_{k\in \N}\in c_0 \right\}
\]
and
\[
\lim_{k\to \infty}\left(w^k(x_{k+1}-l(x))+x_1-l(x)\right)=x_1-l(x).
\]

Since, by Lemma \ref{lem1}, $J:c(\N)\to c_0(\Z_+)$ is a \textit{homeomorphic isomorphism}, the operator ${\hat{A}}^n$ ($n\in \N$) inherits \textit{linearity}, \textit{unboundedness}, \textit{closedness}, \textit{chaoticity}, and \textit{eigenvalues} along with their \textit{geometric multiplicities} directly from its conjugate $A^n$ via $J$. 

Therefore, the statement follows immediately from Theorem \ref{ULCc0}.
\end{proof}

 
\end{document}